\declaretheorem[numberwithin=section]{theorem}
\declaretheorem[numberlike=theorem]{lemma}
\declaretheorem[numberlike=theorem]{corollary}
\declaretheorem[numberlike=theorem]{proposition}
\declaretheorem[style=definition,numberlike=theorem]{definition}
\declaretheorem[style=remark,numberlike=theorem]{remark}
\newcommand{\terminology}[1]{\emph{#1}}
\newcommand{\RR}{\mathbb{R}}
\newcommand{\NN}{\mathbb{N}}
\newcommand{\HH}{\mathbb{H}}
\newcommand{\Lloc}{\mathrm{L}^2_{\text{loc}}}
\newcommand{\abs}[1]{\left\lvert#1\right\rvert}
\newcommand{\norm}[1]{\left\lVert#1\right\rVert}
\newcommand{\restr}[2]{\left.#1\right|_{#2}}
\newcommand{\dilationfactor}{\lambda}
\newcommand*{\mint}[1]{\mint@l{#1}{}}
\newcommand*{\mint@l}[2]{\@ifnextchar\limits{\mint@l{#1}}{\@ifnextchar\nolimits{\mint@l{#1}}{\@ifnextchar\displaylimits{\mint@l{#1}}{\mint@s{#2}{#1}}}}}
\newcommand*{\mint@s}[2]{\@ifnextchar_{\mint@sub{#1}{#2}}{\@ifnextchar^{\mint@sup{#1}{#2}}{\mint@{#1}{#2}{}{}}}}
\def\mint@sub#1#2_#3{\@ifnextchar^{\mint@sub@sup{#1}{#2}{#3}}{\mint@{#1}{#2}{#3}{}}}
\def\mint@sup#1#2^#3{\@ifnextchar_{\mint@sup@sub{#1}{#2}{#3}}{\mint@{#1}{#2}{}{#3}}}
\def\mint@sub@sup#1#2#3^#4{\mint@{#1}{#2}{#3}{#4}}
\def\mint@sup@sub#1#2#3_#4{\mint@{#1}{#2}{#4}{#3}}
\newcommand*{\mint@}[4]{\mathop{}\mkern-\thinmuskip\mathchoice{\mint@@{#1}{#2}{#3}{#4}\displaystyle\textstyle\scriptstyle}{\mint@@{#1}{#2}{#3}{#4}\textstyle\scriptstyle\scriptstyle}{\mint@@{#1}{#2}{#3}{#4}\scriptstyle\scriptscriptstyle\scriptscriptstyle}{\mint@@{#1}{#2}{#3}{#4}\scriptscriptstyle\scriptscriptstyle\scriptscriptstyle}\mkern-\thinmuskip\int#1\ifx\\#3\\\else_{#3}\fi\ifx\\#4\\\else^{#4}\fi}
\newcommand*{\mint@@}[7]{\begingroup \sbox0{$#5\int\m@th$}\sbox2{$#5\int_{}\m@th$}\dimen2=\wd0\let\mint@limits=#1\relax \ifx\mint@limits\relax \sbox4{$#5\int_{\kern1sp}^{\kern1sp}\m@th$}\ifdim\wd4>\wd2 \let\mint@limits=\nolimits \else \let\mint@limits=\limits \fi \fi \ifx\mint@limits\displaylimits \ifx#5\displaystyle \let\mint@limits=\limits \fi \fi \ifx\mint@limits\limits \sbox0{$#7#3\m@th$}\sbox2{$#7#4\m@th$}\ifdim\wd0>\dimen2 \dimen2=\wd0 \fi \ifdim\wd2>\dimen2 \dimen2=\wd2 \fi \fi \rlap{$#5\vcenter{\hbox to\dimen2{\hss $#6{#2}\m@th$\hss }}$}\endgroup }
\newcommand{\intavg}{\mint{-}}
\newcommand{\lt}{<}
\newcommand{\gt}{>}
\title[Infinite geodesics and isometric embeddings]{Infinite geodesics and isometric embeddings in Carnot groups of step 2}
\author{Eero Hakavuori}
\address[Hakavuori]{SISSA, Via Bonomea 265, 34136 Trieste}
\email{eero.hakavuori@sissa.it}
\keywords{Carnot groups, isometries, isometric embeddings, geodesics, sub-Riemannian geometry, sub-Finsler geometry}
\date{November 19, 2019}
\begin{document}
\label{x:article:step2_infinite_geodesics}

\begin{abstract}
In the setting of step 2 sub-Finsler Carnot groups with strictly convex norms, we prove that all infinite geodesics are lines. It follows that for any other homogeneous distance, all geodesics are lines exactly when the induced norm on the horizontal space is strictly convex. As a further consequence, we show that all isometric embeddings between such homogeneous groups are affine. The core of the proof is an asymptotic study of the extremals given by the Pontryagin Maximum Principle.
\end{abstract}

\subjclass[2010]{%
30L05, % Geometric embeddings of metric spaces
53C17, % Sub-Riemannian geometry
49K21, % Problems involving relations other than differential equations
22E25. % Nilpotent and solvable Lie groups
}
\thanks{The author has been partially supported by the Vilho, Yrjö and Kalle Väisälä Foundation, by the Academy of Finland (grant 288501 `\emph{Geometry of subRiemannian groups}'), and by the European Research Council (ERC Starting Grant 713998 GeoMeG `\emph{Geometry of Metric Groups}').}

\maketitle
\tableofcontents
\typeout{************************************************}
\typeout{Section 1 Introduction}
\typeout{************************************************}

\section{Introduction}\label{x:section:section-introduction}
Carnot groups have rich algebraic and metric structures, and share many properties with normed spaces. Recently several articles have generalized classical regularity results of isometric embeddings in normed spaces into the setting of Carnot groups. In real normed spaces, there are two simple criteria for an isometric embedding to be affine: surjectivity or strict convexity of the norm on the target. Both regularity criteria have analogues for isometric embeddings of Carnot groups.

Surjective isometric embeddings behave in the Carnot group case similarly as they do in the normed-space case. Namely, isometries between arbitrary (open subsets of) Carnot groups are affine \cite{Le_Donne-Ottazzi-2017-carnot_isometries}, i.e.\@, compositions of left translations and group homomorphisms. For globally defined isometries, there is an even more general result that isometries between connected nilpotent metric Lie groups are affine \cite{Kivioja-Le_Donne-2017-nilpotent_isometries}.

For non-surjective isometric embeddings, it was proved in \cite{Kishimoto-2003-carnot-geodesics-and-isometries} that if \(G\) is a sub-Riemannian Carnot group of step 2, then all isometric embeddings \(\RR\hookrightarrow G\), i.e.\@, all \terminology{infinite geodesics}, are affine. This property was coined the \terminology{geodesic linearity property} in \cite{Balogh-Fassler-Sobrino-2018-embeddings_into_heisenberg}, and was used as an alternative to the strict convexity criterion as the two conditions are equivalent in normed spaces. More precisely, it was shown in \cite{Balogh-Fassler-Sobrino-2018-embeddings_into_heisenberg} that if \(\HH^n\) is a Heisenberg group with a homogeneous distance satisfying the geodesic linearity property, then all isometric embeddings \(\RR^m\hookrightarrow \HH^n\) and \(\HH^m\hookrightarrow\HH^n\) are affine.

It was conjectured in \cite{Balogh-Fassler-Sobrino-2018-embeddings_into_heisenberg} and subsequently proved in \cite{Balogh-Calogero-2018-infinite_heisenberg_geodesics} that for Heisenberg groups the geodesic linearity property is equivalent to strict convexity of the projection norm, see {Definition~\ref{x:definition:def-projection-norm}}. While the point of view presented in \cite{Balogh-Fassler-Sobrino-2018-embeddings_into_heisenberg} is purely metric, the essential tools of the proof in \cite{Balogh-Calogero-2018-infinite_heisenberg_geodesics} arise from considering an isometric embedding \(\RR\hookrightarrow \HH^n\) as an optimal control problem, and reformulating the first order necessary criterion of the Pontryagin Maximum Principle in the language of convex analysis.

The goal of this paper is to extend the main results of \cite{Balogh-Fassler-Sobrino-2018-embeddings_into_heisenberg} and \cite{Balogh-Calogero-2018-infinite_heisenberg_geodesics} to arbitrary Carnot groups of step 2. The central object of study is again the Pontryagin Maximum Principle, from which relevant invariants will be extracted by an asymptotic study of the optimal controls. The main result of the paper is the following:
\begin{theorem}\label{x:theorem:thm-infinite-geodesics-are-lines}
In every {sub-Finsler Carnot group} of step 2 with a strictly convex norm, every infinite {geodesic} is affine.
\end{theorem}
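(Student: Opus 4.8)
The plan is to analyze an infinite geodesic through the Pontryagin Maximum Principle and an asymptotic study of its control. Fix a unit-speed infinite geodesic $\gamma\colon\RR\to G$ with $\gamma(0)=e$, and write points of $G$ in exponential coordinates as pairs $(x,z)\in V_1\times V_2$, so that a horizontal curve is governed by its control $u=\dot x$ (with $\norm{u}\equiv 1$) and $\dot z=\tfrac12[x,u]$. Since every restriction $\restr{\gamma}{[s,t]}$ is length-minimizing, the PMP applies on each interval; because $V_2$ is central, the vertical covector $q\in V_2^*$ is constant along the extremal, and the horizontal covector $p(t)\in V_1^*$ obeys the linear equation $\dot p=J_q u$, where $J_q\colon V_1\to V_1^*$ is the skew map $\langle J_q v,w\rangle=\langle q,[v,w]\rangle$. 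Strict convexity of the norm is what makes this a genuine ODE: the maximized Hamiltonian is the dual norm $\norm{\cdot}_*$, which is then differentiable away from the origin, so the maximizing control is the single-valued duality map $u(t)=\nabla\norm{\cdot}_*(p(t))$. The Hamiltonian value $\norm{p(t)}_*$ is conserved (directly, since $\langle u,J_q u\rangle=0$), so $p$ stays on a fixed, compact dual sphere and $u$ is bounded. Finally $\gamma$ is \terminology{affine} precisely when $u$ is constant, equivalently when $p$ sits at an equilibrium $u(t)\in\ker J_q$ of this flow; so the theorem amounts to excluding non-constant controls.

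The core is an asymptotic averaging statement for the flow $\dot p=J_q\nabla\norm{\cdot}_*(p)$ on the compact sphere. I would first record its structure: it is the Hamiltonian flow of the convex function $\norm{\cdot}_*$ with respect to the (pre)symplectic form determined by $J_q$, with the frozen directions $\ker J_q$ carrying all possible net drift. The goal is to show that the Cesàro average
\[
v_\infty=\lim_{T\to\infty}\frac1T\int_0^T u(t)\,dt
\]
exists and, more importantly, controls $x$ sharply, in the sense that $x(t)=v_\infty t+o(t)$. Granting this, strict convexity does the rest: as an average of unit vectors, $\norm{v_\infty}\le 1$, and equality in this integral triangle inequality forces $u$ to be constant, i.e.\ $\gamma$ affine. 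Thus a non-affine geodesic would have $\norm{v_\infty}\lt 1$.

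It remains to reach a contradiction from $\norm{v_\infty}\lt 1$ with minimality, which is where the precise asymptotics pays off. Writing $x(t)=v_\infty t+R(t)$ with $R(t)=o(t)$ and substituting into $\dot z=\tfrac12[x,u]=\tfrac12[v_\infty t+R,\,v_\infty+\dot R]$, the would-be leading term is killed by $[v_\infty,v_\infty]=0$, and an integration by parts shows the vertical displacement grows strictly slower than the homogeneous scaling would allow, namely $\abs{z(t)}=o(t^2)$. Consequently the endpoint $\gamma(t)$ has horizontal part of norm $\norm{v_\infty}t+o(t)$ and vertical part $o(t^2)$, so it can be reached by first running the straight one-parameter subgroup in the horizontal direction (length $\norm{v_\infty}t+o(t)$) and then correcting the vertical defect by a short loop (length $\asymp\abs{z(t)}^{1/2}=o(t)$). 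This produces a path from $\gamma(0)$ to $\gamma(t)$ of length $\norm{v_\infty}t+o(t)\lt t$ for all large $t$, contradicting that $\restr{\gamma}{[0,t]}$ is minimizing of length $t$. The main obstacle is precisely the averaging step $x(t)=v_\infty t+o(t)$: on the compact sphere the orbits of the skew-gradient flow need not be periodic, and establishing a sublinear (little-$o$) remainder for every orbit, uniformly enough to annihilate the quadratic term in $z$, is the delicate analytic heart of the argument. I would also separately dispatch the degenerate cases, namely extremals with $q=0$ (where $p$ is constant outright, so $u$ is constant and $\gamma$ is a line) and abnormal minimizers (where $\norm{p}_*\equiv 0$), checking that these too can only be lines in the strictly convex step-2 setting.
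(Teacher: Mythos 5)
There is a genuine gap here, and it is larger than the one you flag. The step you defer---existence of the Cesàro limit $v_\infty$ together with the sharp asymptotic $x(t)=v_\infty t+o(t)$---is indeed the crux, and the paper never proves it: no full Cesàro convergence of the control is established anywhere. Instead the paper works with \emph{subsequential} blowdowns (Arzelà--Ascoli plus $\Lloc$-convergence of the dilated controls, Lemma~\ref{x:lemma:lemma-blowdown-properties}) and imports an external result (\cite[Theorem~1.4]{Hakavuori-Le_Donne-2018-cones}, via Lemma~\ref{x:lemma:lemma-existence-of-blowdown-line}) to produce \emph{some} blowdown that is a line; the only averaging it needs is the soft statement $B\bigl(\intavg_0^T u(t)\,dt,X\bigr)\to 0$, which follows from boundedness of the dual curve $a$ (Lemma~\ref{x:lemma:lemma-limit-average-in-kernel}) and yields that every blowdown control lies in $\ker B$ (Lemma~\ref{x:lemma:lemma-asymptotic-cone-control-in-kernel}). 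So the ``delicate analytic heart'' you would need can be bypassed, but only by a substantive input your proposal does not contain.

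Worse, even granting your averaging step, the dichotomy you build on it fails. The equality case of your ``integral triangle inequality'' is wrong: $\norm{v_\infty}=1$ is a statement about the \emph{limit} of the averages $\frac1T\int_0^T u(t)\,dt$ and forces no finite average to have norm $1$; for instance, unit vectors $u(t)$ converging to a fixed unit vector as $t\to\infty$ have $\norm{v_\infty}=1$ without $u$ being constant. Moreover, for a geodesic the branch $\norm{v_\infty}<1$ can never occur: if $x(t)=v_\infty t+o(t)$ and $z(t)=o(t^2)$, then $\delta_{1/\dilationfactor}\gamma(\dilationfactor t)\to\exp(tv_\infty)$ pointwise, and since a pointwise limit of unit-speed geodesics is a unit-speed geodesic, $\norm{v_\infty}=1$ automatically---so your minimality contradiction is vacuous, and the entire theorem lives in exactly the case you dismiss in one line. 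This is to be expected: the non-affine infinite geodesic in the step-3 Engel group is asymptotic to lines, so ``blowdown is a line'' plus generic convexity cannot suffice; the step-2 structure must enter the endgame. The missing mechanism is the one the paper runs in Section~\ref{x:section:sec-infinite-geodesics-proof-conclusion}: the blowdown direction $Y$ lies in $\ker B$ (your $\ker J_q$), hence by skew-symmetry $t\mapsto a(t)Y$ (your $p(t)Y$) is a conserved quantity---available in your own setup via $\frac{d}{dt}p(t)Y=\langle J_qu(t),Y\rangle=0$, but unused; taking subdifferential limits along the blowdown times gives $a(t)Y\equiv\norm{Y}^2=1$, while $a(t)u(t)=\norm{u(t)}^2=1$ always, and then strict convexity (applied to convex combinations $X$ of $u(t)$ and $Y$, where $1=a(t)X\leq\norm{X}$) pins $u(t)=Y$ for almost every $t$. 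Your smooth-duality setup ($u(t)=\nabla\norm{\cdot}_*(p(t))$, conserved Hamiltonian, constancy of $q$) is correct in finite dimensions and matches the paper's Proposition~\ref{x:proposition:prop-step-2-PMP}, but without the $\ker B$ invariant argument the proof does not close.
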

\begin{corollary}\label{x:corollary:cor-homogeneous-group-geodesics-are-lines}
Let \(G\) be a {stratified group} of step 2 equipped with a {homogeneous distance} \(d\) such that the {projection norm} of \(d\) is strictly convex. Then every infinite {geodesic} in \((G,d)\) is affine.
\end{corollary}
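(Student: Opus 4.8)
The plan is to deduce the statement from Theorem~\ref{x:theorem:thm-infinite-geodesics-are-lines} by comparing \(d\) with the sub-Finsler distance \(d_N\) induced on \(G\) by the projection norm \(N\) of \(d\). Since \(N\) is strictly convex by hypothesis, \((G,d_N)\) is a sub-Finsler Carnot group of step \(2\) with strictly convex norm, so the theorem already tells us that every infinite \(d_N\)-geodesic is affine. It therefore suffices to show that every infinite \(d\)-geodesic is in fact an infinite \(d_N\)-geodesic; affineness then follows at once. The reason this reduction is not vacuous is that along any one-parameter subgroup generated by a horizontal \(w\) in the first layer \(V_1\) one has \(\exp(sw)^{-1}\exp(tw)=\exp((t-s)w)\), so left-invariance and \(1\)-homogeneity make \(t\mapsto g_0\exp(tw)\) simultaneously a \(d\)-geodesic and a \(d_N\)-geodesic; the whole content is thus to rule out the existence of any other (non-affine) infinite \(d\)-geodesic.

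The mechanism I would use is the horizontal projection \(\pi\colon G\to V_1\) onto the abelianization. By the definition of the projection norm (Definition~\ref{x:definition:def-projection-norm}), \(\pi\) is a \(1\)-Lipschitz metric quotient from \((G,d)\) onto \((V_1,N)\); in particular, for a unit-speed infinite \(d\)-geodesic \(\gamma\) the projected curve \(\bar\gamma=\pi\circ\gamma\) satisfies \(N(\bar\gamma(t)-\bar\gamma(s))\le\abs{t-s}\). The key step is to upgrade this contraction to an equality, i.e.\ to prove that \(\bar\gamma\) is a unit-speed geodesic of \((V_1,N)\). Granting this, strict convexity of \(N\) forces \(\bar\gamma\) to be an affine line \(\bar\gamma(t)=\bar\gamma(0)+tw\) with \(N(w)=1\), and the now-isometric behaviour of \(\pi\) along \(\gamma\) pins the vertical coordinate so that \(\gamma\) coincides with the horizontal line \(t\mapsto\gamma(0)\exp(tw)\). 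As noted above, such a line is a \(d_N\)-geodesic, and we conclude by the theorem.

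The main obstacle is exactly this upgrade from contraction to isometry, that is, excluding an asymptotic vertical drift of \(\gamma\). The naive ball-box estimate \(d(0,(v,\zeta))\asymp N(v)+\abs{\zeta}^{1/2}\) does not suffice: a vertical coordinate growing quadratically in \(t\) contributes only linearly to \(d\), so a priori a geodesic could spend a fixed proportion of its length on transverse horizontal motion that generates such a drift while keeping its horizontal speed strictly below \(1\). Ruling this out is where strict convexity of \(N\) must enter. Writing \(\gamma(0)^{-1}\gamma(n)\) as a product of \(n\) unit increments whose horizontal parts \(v_1,\dots,v_n\) satisfy \(N(v_i)\le1\) and \(\sum_i v_i=\bar\gamma(n)-\bar\gamma(0)\), the crux is to show that \(N(\sum_i v_i)\) cannot remain bounded away from \(n\); once this is known, strict convexity forces the \(v_i\) to align to a common unit direction, which kills the transverse motion responsible for the drift. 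This asymptotic alignment argument is the analogue, at the level of the induced norm, of the asymptotic study of the Pontryagin extremals underlying Theorem~\ref{x:theorem:thm-infinite-geodesics-are-lines}, and I expect it to be the technical heart of the reduction.
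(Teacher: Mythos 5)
Your reduction target is the right one---transfer \(\gamma\) from \((G,d)\) to the sub-Finsler metric \(d_N\) of the projection norm and invoke Theorem~\ref{x:theorem:thm-infinite-geodesics-are-lines}---but the mechanism you propose for the transfer has a genuine gap and is in fact circular. Your plan is: show \(\bar\gamma=\pi\circ\gamma\) is a unit-speed geodesic of \((V_1,N)\), deduce from strict convexity that \(\bar\gamma\) is an affine line, lift to conclude \(\gamma(t)=\gamma(0)\exp(tw)\), and only then observe that such a line is a \(d_N\)-geodesic so that the theorem applies. But once you know \(\gamma\) is a horizontal line you are already done, so the final appeal to the theorem is vacuous; all of the content sits in the unproved ``upgrade from contraction to isometry,'' which you yourself flag as the crux. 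That crux is not a lemma one can establish en route by elementary means: the statement ``the projection of every infinite \(d\)-geodesic is a geodesic of \((V_1,N)\)'' is essentially equivalent to the corollary itself (it holds precisely \emph{because} infinite geodesics are lines). In the paper, even the weaker asymptotic version of this fact rests on the blowdown machinery of Lemma~\ref{x:lemma:lemma-existence-of-blowdown-line}, which invokes the nontrivial result \cite{Hakavuori-Le_Donne-2018-cones}, together with the full PMP analysis of Sections~\ref{x:section:sec-pmp}--\ref{x:section:sec-asymptotics}. Moreover, even if your alignment argument were completed, showing that \(N\bigl(\sum_i v_i\bigr)=n-o(n)\) forces the increments \(v_i\) to align only \emph{asymptotically}; this yields that blowdowns of \(\gamma\) are lines (i.e., Lemma~\ref{x:lemma:lemma-existence-of-blowdown-line}), which is strictly weaker than affinity of \(\gamma\): a priori the deviations could tend to \(0\) slowly enough that \(\bar\gamma\) is never exactly affine while still having full asymptotic horizontal speed. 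Ruling that out is exactly what the exact invariant \(a(t)Y\equiv\mathrm{const}\) from the PMP plus strict convexity accomplish in the proof of Theorem~\ref{x:theorem:thm-infinite-geodesics-are-lines}---and that proof applies to sub-Finsler geodesics, which at this stage of your argument you do not yet know \(\gamma\) to be.

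The paper's actual proof sidesteps all of this with a soft argument you are missing: pass to the length metric \(d_\ell\) associated with \(d\). Any \(d\)-geodesic is automatically a \(d_\ell\)-geodesic, since lengths of rectifiable curves in \(d\) and \(d_\ell\) agree, giving \(\abs{t-s}\leq d_\ell(\gamma(s),\gamma(t))\leq \ell_d(\restr{\gamma}{[s,t]})=\abs{t-s}\); by Le Donne's metric characterization of Carnot groups, \((G,d_\ell)\) is a sub-Finsler Carnot group; and by the submetry property of Lemma~\ref{x:lemma:lemma-projection-is-submetry}, its sub-Finsler norm coincides with the projection norm \(N\) (this is Lemma~\ref{x:lemma:lemma-length-metric-of-homogeneous-distance-is-subfinsler}). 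So the transfer you wanted---every infinite \(d\)-geodesic is an infinite geodesic for the sub-Finsler metric of \(N\)---holds for free, with no asymptotic analysis, and Theorem~\ref{x:theorem:thm-infinite-geodesics-are-lines} then concludes. If you want to salvage your outline, replace the ``upgrade to isometry'' step by this length-metric identification; as written, that step is where the proof breaks.
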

The necessity of the strict convexity assumption is a direct consequence of the necessity of strict convexity for linearity of geodesics in the normed-space case, see {Proposition~\ref{x:proposition:prop-existence-of-non-line-infinite-geodesic}}. Examples may also be found from the singular geodesics for the non-strictly convex \(\ell^\infty\) sub-Finsler norm exhibited in \cite{BBLDS-2017-sub-finsler-control-viewpoint} and \cite{Ardentov-Le_Donne-Sachkov-2019-subfinsler_cartan}.

The restriction to step 2 is motivated by the known counterexample in the simplest Carnot group of step 3, the sub-Riemannian Engel group. The complete study of geodesics in the sub-Riemannian Engel group in \cite{Ardentov-Sachkov-2015-Engel_cut_time} gives the first (and to date essentially only) known example of a non-affine infinite geodesic in a sub-Riemannian Carnot group. Note that in general, very little is known about geodesics even in the sub-Riemannian case, see \cite{Hakavuori-Le_Donne-2016-corners,Monti-Pigati-Vittone-2018-existence_of_tangent_lines,Hakavuori-Le_Donne-2018-cones,BCJPS-2018-rank_2_abnormals,BFPR-2018-3d_Sard} for some recent results.

The proof for Heisenberg groups in \cite{Balogh-Fassler-Sobrino-2018-embeddings_into_heisenberg} that the geodesic linearity property of the target implies that all isometric embeddings are affine works also more generally for stratified groups, see {Proposition~\ref{x:proposition:prop-affine-geodesics-implies-affine-embeddings}}. Consequently, {Corollary~\ref{x:corollary:cor-homogeneous-group-geodesics-are-lines}} leads to the analogous rigidity result for arbitrary isometric embeddings:
\begin{theorem}\label{x:theorem:thm-isometric-embeddings}
Let \((H,d_H)\) and \((G,d_G)\) be {stratified groups} with {homogeneous distances} such that \(G\) has step 2 and the {projection norm} of \(d_G\) is strictly convex. Then every isometric embedding \((H,d_H)\hookrightarrow (G,d_G)\) is affine.
\end{theorem}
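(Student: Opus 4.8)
The plan is to deduce the theorem as a direct combination of the two facts already assembled in the paper: that the hypotheses on the target $(G,d_G)$ by themselves force it to have the geodesic linearity property, and that this property of the target propagates to affineness of every isometric embedding into it. In other words I would split the argument along the same dichotomy used in the introduction, first producing affine infinite geodesics in the target and then transporting affineness back to the embedding.

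First I would verify that $(G,d_G)$ meets the hypotheses of Corollary~\ref{x:corollary:cor-homogeneous-group-geodesics-are-lines}. By assumption $G$ is a stratified group of step $2$, the distance $d_G$ is homogeneous, and the projection norm of $d_G$ in the sense of Definition~\ref{x:definition:def-projection-norm} is strictly convex. These are precisely the hypotheses of that corollary, so every infinite geodesic in $(G,d_G)$ is affine; equivalently, $(G,d_G)$ has the geodesic linearity property. Observe that no assumption on the step of $H$ enters here, since the geodesic linearity property concerns only the target.

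Second, I would feed this into Proposition~\ref{x:proposition:prop-affine-geodesics-implies-affine-embeddings}. Both $(H,d_H)$ and $(G,d_G)$ are stratified groups with homogeneous distances, and the target has just been shown to enjoy the geodesic linearity property, so the proposition applies directly and yields that every isometric embedding $(H,d_H)\hookrightarrow(G,d_G)$ is affine, which is the claim. The mechanism behind the proposition is that the horizontal one-parameter subgroups $t\mapsto\exp(tX)$ of $H$ are themselves constant-speed infinite geodesics of $(H,d_H)$, by left-invariance and homogeneity of $d_H$; an isometric embedding therefore sends each of them to an infinite geodesic of $(G,d_G)$, which is affine by geodesic linearity. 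One then promotes the resulting fact that horizontal lines are sent to lines into full affineness of the embedding, using that $H$ is generated by these subgroups together with the compatibility of the embedding with the dilation structure.

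The genuine difficulty lies entirely in the two ingredients rather than in their combination. The asymptotic analysis of the Pontryagin extremals underlying Theorem~\ref{x:theorem:thm-infinite-geodesics-are-lines} and Corollary~\ref{x:corollary:cor-homogeneous-group-geodesics-are-lines} is the heart of the matter, and the passage from geodesic linearity to affine embeddings in Proposition~\ref{x:proposition:prop-affine-geodesics-implies-affine-embeddings} is where the group structure is exploited. For the reduction itself the only points that require care are essentially bookkeeping: that the strict convexity hypothesis is imposed on the projection norm of the target exactly as Corollary~\ref{x:corollary:cor-homogeneous-group-geodesics-are-lines} demands, and that the class of stratified groups permitted as the source by Proposition~\ref{x:proposition:prop-affine-geodesics-implies-affine-embeddings} is broad enough to include an arbitrary $(H,d_H)$ with no restriction on its step.
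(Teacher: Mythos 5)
Your proposal is correct and is essentially identical to the paper's own proof, which states verbatim that Theorem~\ref{x:theorem:thm-isometric-embeddings} follows directly by combining Corollary~\ref{x:corollary:cor-homogeneous-group-geodesics-are-lines} (applied to the target, whose step-2 and strict-convexity hypotheses you check correctly) with Proposition~\ref{x:proposition:prop-affine-geodesics-implies-affine-embeddings}. One cosmetic remark: in your sketch of the proposition's internals, the independence of the image direction from the base point comes from the sublinear-distance criterion of Lemma~\ref{x:lemma:lemma-sublinear-geodesics} rather than from any ``compatibility of the embedding with the dilation structure,'' but since you only cite the proposition as a black box this does not affect the validity of your argument.
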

It is worth remarking that although there are no explicit restrictions on the domain \((H,d_H)\) in {Theorem~\ref{x:theorem:thm-isometric-embeddings}}, the mere existence of an isometric embedding \((H,d_H)\hookrightarrow (G,d_G)\) implies some restrictions. In particular, Pansu's Rademacher theorem \cite{Pansu-1989-metriques_et_quasiisometries} implies that there must exist an injective homogeneous homomorphism \(H\to G\). It follows that \(H\) has step at most 2 and rank at most the rank of \(G\).

\typeout{************************************************}
\typeout{Subsection 1.1 Structure of the paper}
\typeout{************************************************}

\subsection{Structure of the paper}\label{g:subsection:idm105}
{Section~\ref{x:section:sec-definitions}} presents the relevant definitions that will be used throughout the rest of the paper and some basic lemmas. The main points of interest are properties of blowdowns of geodesics, i.e.\@, geodesics ``viewed from afar'', and the observations about subdifferentials of convex functions.

{Sections~\ref{x:section:sec-pmp}\textendash{}\ref{x:section:sec-infinite-geodesics-proof-conclusion}} are devoted to the proofs of {Theorem~\ref{x:theorem:thm-infinite-geodesics-are-lines}} and {Corollary~\ref{x:corollary:cor-homogeneous-group-geodesics-are-lines}} about infinite geodesics. {Section~\ref{x:section:sec-pmp}} rephrases the classical first order optimality condition of the Pontryagin Maximum Principle in the setting of a step 2 sub-Finsler Carnot group. In the sub-Riemannian case the PMP reduces to a linear ODE for the controls. This is no longer true in the sub-Finsler case, making explicit solution of the system unfeasible. Nonetheless, the PMP has a form ({Proposition~\ref{x:proposition:prop-step-2-PMP}}) that is well suited to the study of the asymptotic behavior of optimal controls. The key object is the bilinear form \(B\colon V_1\times V_1\to\RR\).

{Section~\ref{x:section:sec-asymptotics}} covers the aforementioned asymptotic study. The goal of the section is to study blowdowns of infinite geodesics through the behavior of their controls. Using integral averages of controls, it is shown that any blowdown control must in fact be contained in the kernel of the bilinear form \(B\).

{Section~\ref{x:section:sec-infinite-geodesics-proof-conclusion}} wraps up the proof of {Theorem~\ref{x:theorem:thm-infinite-geodesics-are-lines}} using the conclusions of the previous sections. This section is where the strict convexity of the norm is critical. The importance of the assumption is that any linear map has a unique maximum on the ball. By observing that any element of \(\ker B\) defines an invariant along the corresponding optimal control, the uniqueness is exploited to prove that infinite geodesics must be invariant under blowdowns. {Corollary~\ref{x:corollary:cor-homogeneous-group-geodesics-are-lines}} follows from the sub-Finsler case by the observation that the length metric associated with a homogeneous norm is always a sub-Finsler metric.

{Section~\ref{x:section:sec-isometric-embeddings}} covers the proof of {Theorem~\ref{x:theorem:thm-isometric-embeddings}} about isometric embeddings as a consequence of {Corollary~\ref{x:corollary:cor-homogeneous-group-geodesics-are-lines}}. The link between geodesics and general isometric embeddings arises from considering a foliation by horizontal lines in the domain and studying the induced foliation by infinite geodesics in the image. The affinity of isometric embeddings follows from the observation that two lines are at a sublinear distance from each other if and only if they are parallel.

\typeout{************************************************}
\typeout{Section 2 Preliminaries}
\typeout{************************************************}

\section{Preliminaries}\label{x:section:sec-definitions}

\typeout{************************************************}
\typeout{Subsection 2.1 Stratified groups and homogeneous distances}
\typeout{************************************************}

\subsection{Stratified groups and homogeneous distances}\label{g:subsection:idm132}
\begin{definition}\label{x:definition:def-stratified-group}
A \terminology{stratified group} is a Lie group \(G\) whose Lie algebra has a decomposition \(\mathfrak{g}=V_1\oplus V_2\oplus\dots\oplus V_s\) such that \(V_s\neq \{0\}\) and \([V_1,V_k] = V_{k+1}\) for all \(k=1,\dots,s\), with the convention that \(V_{s+1}=\{0\}\). The \terminology{rank} and \terminology{step} of the stratified group \(G\) are the integers \(r=\dim V_1\) and \(s\) respectively.
\label{g:notation:idm149}
\label{g:notation:idm152}
\label{g:notation:idm155}
\end{definition}
\begin{definition}\label{x:definition:def-dilation}
A \terminology{dilation} by a factor \(\dilationfactor\in\RR\) on a {stratified group} \(G\) is the Lie group automorphism \(\delta_{\dilationfactor}\colon G\to G\) defined for any \(X=X_1+\dots+X_s\in V_1\oplus\dots\oplus V_s\) by
\begin{equation*}
\delta_{\dilationfactor}\exp(X_1+X_2+\dots+X_s) = \exp(\dilationfactor X_1 + \dilationfactor^2X_2+\dots+\dilationfactor^sX_s)\text{.}
\end{equation*}

\label{g:notation:idm168}
\end{definition}
\begin{definition}\label{x:definition:def-homogeneous-distance}
A \terminology{homogeneous distance} on a {stratified group} \(G\) is a left-invariant distance \(d\), which is one-homogeneous with respect to the {dilations}, i.e.\@, which satisfies
\begin{equation*}
d(\delta_{\dilationfactor}(g),\delta_{\dilationfactor}(h)) = \dilationfactor d(g,h)\quad\forall \dilationfactor>0,\, \forall g,h\in G\text{.}
\end{equation*}

\end{definition}

\typeout{************************************************}
\typeout{Subsection 2.2 The projection norm}
\typeout{************************************************}

\subsection{The projection norm}\label{g:subsection:idm182}
\begin{definition}\label{x:definition:def-projection-norm}
Let \(G\) be a {stratified group} and let \(d\) be a {homogeneous distance} on \(G\). The \terminology{projection norm} associated with the homogeneous distance \(d\) is the function
\begin{equation*}
\norm{\cdot}_d\colon V_1\to\RR,\quad \norm{X}_d = d(e,\exp(X))\text{,}
\end{equation*}
where \(e\) is the identity element of the group \(G\).
\label{g:notation:idm197}
\end{definition}
It is not immediate that \(\norm{\cdot}_d\) defines a norm. In the setting of the Heisenberg groups, this is proved in \cite[Proposition~2.8]{Balogh-Fassler-Sobrino-2018-embeddings_into_heisenberg}. Their proof works with minor modification for any homogeneous distances in arbitrary stratified groups and is captured in the following lemmas. The triangle inequality of \(\norm{\cdot}_d\) is the only non-trivial part. In order to make use of the triangle inequality of the distance \(d\), the following distance estimate is used.
\begin{lemma}\label{x:lemma:lemma-projection-is-submetry}
Let \(\pi_{V_1}\colon \mathfrak{g}=V_1\oplus\dots\oplus V_s\to V_1\) be the projection with respect to the direct sum decomposition. Then
\begin{equation*}
\norm{X}_d\leq d(e,\exp(X+Y))\quad\forall X\in V_1,\,\forall Y\in [\mathfrak{g},\mathfrak{g}]\text{,}
\end{equation*}
so the horizontal projection \(\pi = \pi_{V_1}\circ\log\colon (G,d)\to (V_1,\norm{\cdot}_d)\) is a submetry.
\end{lemma}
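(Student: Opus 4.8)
The plan is to prove the displayed inequality first, and then read off the submetry property as a short consequence.

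For the inequality, fix \(X\in V_1\) and \(Y\in[\mathfrak g,\mathfrak g]=V_2\oplus\dots\oplus V_s\), and set \(g=\exp(X+Y)\). Since \(t\mapsto\exp(t(X+Y))\) is a one-parameter subgroup, \(g^n=\exp(n(X+Y))\) for every \(n\in\NN\). Writing \(Y=Y_2+\dots+Y_s\) with \(Y_k\in V_k\) and applying the dilation \(\delta_{1/n}\), which scales \(V_k\) by \(n^{-k}\), I compute
\[
\delta_{1/n}(g^n)=\exp\!\Big(X+\tfrac1n Y_2+\dots+\tfrac1{n^{s-1}}Y_s\Big),
\]
so that \(\delta_{1/n}(g^n)\to\exp(X)\) as \(n\to\infty\). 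On the other hand, subadditivity along the powers of \(g\) together with left-invariance gives \(d(e,g^n)\le n\,d(e,g)\), and one-homogeneity of \(d\) then yields
\[
d\big(e,\delta_{1/n}(g^n)\big)=\tfrac1n\,d(e,g^n)\le d(e,g).
\]
Passing to the limit and using that \(d\) induces the manifold topology, so that \(d(e,\cdot)\) is continuous, I obtain \(\norm{X}_d=d(e,\exp(X))\le d(e,g)=d(e,\exp(X+Y))\), which is the claim.

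For the submetry statement, I would first observe that \(\pi=\pi_{V_1}\circ\log\) is a group homomorphism onto \((V_1,+)\): by the Baker--Campbell--Hausdorff formula the correction \(\log(gh)-\log(g)-\log(h)\) lies in \([\mathfrak g,\mathfrak g]=\ker\pi_{V_1}\), hence \(\pi(gh)=\pi(g)+\pi(h)\). Consequently \(\pi\) intertwines left translations on \(G\) with translations on \(V_1\), so by left-invariance of \(d\) and of the metric induced by \(\norm{\cdot}_d\) it suffices to verify the submetry condition \(\pi(\overline B(e,r))=\overline B(0,r)\) at the identity. The inclusion \(\pi(\overline B(e,r))\subseteq\overline B(0,r)\) is exactly the inequality above: if \(d(e,h)\le r\) and \(h=\exp(X+Y)\), then \(\norm{\pi(h)}_d=\norm{X}_d\le d(e,h)\le r\). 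The reverse inclusion is immediate by lifting, since for \(v\in V_1\) with \(\norm{v}_d\le r\) the point \(\exp(v)\) satisfies \(\pi(\exp(v))=v\) and \(d(e,\exp(v))=\norm{v}_d\le r\).

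The only real difficulty is the inequality, and within it the one delicate point is the passage to the limit, which requires knowing that the homogeneous distance \(d\) is continuous with respect to the manifold topology; I would invoke this as a standard property of homogeneous distances. Everything else — the computation of \(\delta_{1/n}(g^n)\), the subadditivity estimate, the homomorphism property of \(\pi\), and the lifting step — is routine.
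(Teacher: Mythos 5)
Your proof is correct and takes essentially the same route as the paper's: the same dilation-of-powers computation \(\delta_{1/n}\big(\exp(X+Y)^n\big)\to\exp(X)\) combined with subadditivity and one-homogeneity of \(d\), a limit using continuity of the distance, and the same two ball containments for the submetry claim. The only differences are cosmetic: you read the paper's key identity in the opposite direction, and you make explicit (via Baker--Campbell--Hausdorff) that \(\pi\) is a homomorphism so that left-invariance reduces the submetry check to the identity, a detail the paper leaves implicit.
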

\begin{proof}\label{g:proof:idm211}
Observe first that for any \(X\in V_1\) and \(Y=Y_2+\dots+Y_s\in V_2\oplus\dots\oplus V_s=[\mathfrak{g},\mathfrak{g}]\), and any \(n\in\NN\), homogeneity and the triangle inequality imply that
\begin{align*}
nd(e,\exp(X+\frac{1}{n}Y_2 + \dots+\frac{1}{n^{s-1}}Y_s))
&= d(e,\exp(nX+nY))\\
&\leq nd(e,\exp(X+Y))\text{.}
\end{align*}
Continuity of the distance then gives the bound
\begin{align*}
d(e,\exp(X)) &= \lim\limits_{n\to\infty} d(e,\exp(X+\frac{1}{n}Y_2 + \dots+\frac{1}{n^{s-1}}Y_s))\\
&\leq d(e,\exp(X+Y))
\end{align*}
for any \(X\in V_1\) and \(Y\in [\mathfrak{g},\mathfrak{g}]\) as claimed.

The previous estimate implies the containment \(\pi(B(e,r)) \subset B_{\norm{\cdot}_d}(0,r)\) for the projection of any ball \(B(e,r)\subset G\). On the other hand, {Definition~\ref{x:definition:def-projection-norm}} of the projection norm directly implies the opposite containment
\begin{equation*}
B_{\norm{\cdot}_d}(0,r) = V_1\cap \log B(e,r) \subset \pi(B(e,r))\text{.}
\end{equation*}
By left-invariance of the distance \(d\) it follows that the map \(\pi\) is a submetry.
\end{proof}
\begin{remark}\label{g:remark:idm231}
The estimate of {Lemma~\ref{x:lemma:lemma-projection-is-submetry}} is in general false for non-homogeneous left-invariant distances. Examples of the failure may be found by taking any {homogeneous metric} and tilting the decomposition \(V_1\oplus [\mathfrak{g},\mathfrak{g}]\).

Namely, let \(d\) be any {homogeneous distance} on a {stratified group} \(G\) for which the inequality of {Lemma~\ref{x:lemma:lemma-projection-is-submetry}} is strict when \(Y\neq 0\), for instance a sub-Riemannian distance. Define a new stratification for the Lie group \(G\) by replacing the first layer \(V_1\) with \(\tilde{V}_1\), where some vector \(X\in V_1\) is replaced by \(X+Y\) for some central vector \(Y\in [\mathfrak{g},\mathfrak{g}]\). As the Lie brackets and group law are unchanged, \(d\) is a left-invariant distance for the resulting stratified group \(\tilde{G}\), but is no longer homogeneous due to the tilting of the layers. In this way, the notion of projection to the first layer is changed, and the estimate of {Lemma~\ref{x:lemma:lemma-projection-is-submetry}} fails in \((\tilde{G},d)\) for the vectors \(X+Y\in\tilde{V_1}\) and \(-Y\in[\mathfrak{g},\mathfrak{g}]\).
\end{remark}
\begin{lemma}\label{x:lemma:lemma-projection-norm-is-a-norm}
The {projection norm} is a norm.
\end{lemma}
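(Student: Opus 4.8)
The plan is to verify directly the three defining properties of a norm on \(V_1\) --- positive definiteness, absolute homogeneity, and the triangle inequality --- starting from the definition \(\norm{X}_d = d(e,\exp(X))\) and the axioms of a homogeneous distance. Positive definiteness is immediate: since \(d\) is a distance, \(\norm{X}_d = d(e,\exp(X))\geq 0\), with equality if and only if \(\exp(X)=e\); as \(\exp\) is injective on \(V_1\), this forces \(X=0\).

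For absolute homogeneity \(\norm{\dilationfactor X}_d = \abs{\dilationfactor}\norm{X}_d\), I would first treat \(\dilationfactor>0\). By {Definition~\ref{x:definition:def-dilation}} applied to \(X\in V_1\) we have \(\delta_{\dilationfactor}\exp(X)=\exp(\dilationfactor X)\), and \(\delta_{\dilationfactor}\) fixes the identity, so one-homogeneity of \(d\) gives \(\norm{\dilationfactor X}_d = d(e,\delta_{\dilationfactor}\exp(X)) = \dilationfactor\, d(e,\exp(X)) = \dilationfactor\norm{X}_d\). The case \(\dilationfactor=0\) is trivial. To reach negative factors it suffices to record the symmetry \(\norm{-X}_d = \norm{X}_d\): since \(\exp(-X)=\exp(X)^{-1}\), left-invariance together with symmetry of \(d\) yields \(d(e,\exp(X)^{-1}) = d(\exp(X),e) = d(e,\exp(X))\). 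Writing \(\dilationfactor X = (-\dilationfactor)(-X)\) for \(\dilationfactor<0\) and combining with the positive case then gives the general statement.

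The triangle inequality is the \emph{main obstacle}, and it is here that the submetry estimate of {Lemma~\ref{x:lemma:lemma-projection-is-submetry}} does the essential work. Fix \(X,Y\in V_1\). The key observation is that the horizontal projection of the product \(\exp(X)\exp(Y)\) is exactly \(X+Y\): by the Baker--Campbell--Hausdorff formula every correction term in \(\log(\exp(X)\exp(Y))\) involves at least one bracket and hence lies in \([\mathfrak{g},\mathfrak{g}]=V_2\oplus\dots\oplus V_s\), so \(\exp(X)\exp(Y)=\exp\bigl((X+Y)+Z\bigr)\) for some \(Z\in[\mathfrak{g},\mathfrak{g}]\). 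Applying {Lemma~\ref{x:lemma:lemma-projection-is-submetry}} with horizontal part \(X+Y\), and then the triangle inequality and left-invariance of \(d\), gives
\[
\norm{X+Y}_d \leq d\bigl(e,\exp((X+Y)+Z)\bigr) = d\bigl(e,\exp(X)\exp(Y)\bigr) \leq d(e,\exp(X)) + d(e,\exp(Y)) = \norm{X}_d + \norm{Y}_d.
\]
The only delicate point is this identification of the first-layer component of the product, which is precisely what allows {Lemma~\ref{x:lemma:lemma-projection-is-submetry}} to bridge between the genuine triangle inequality for \(d\) and the projected quantity \(\norm{\cdot}_d\); once that is in place the estimate is routine.
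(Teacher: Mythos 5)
Your proof is correct and follows essentially the same route as the paper: the triangle inequality is handled exactly as in the paper's proof, via the Baker--Campbell--Hausdorff identification \(\exp(X)\exp(Y)=\exp((X+Y)+Z)\) with \(Z\in[\mathfrak{g},\mathfrak{g}]\), combined with Lemma~\ref{x:lemma:lemma-projection-is-submetry} and the left-invariance and triangle inequality of \(d\). Your treatment of positive definiteness and homogeneity merely spells out details (dilations, symmetry via \(\exp(-X)=\exp(X)^{-1}\)) that the paper dismisses as immediate, so there is no substantive difference.
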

\begin{proof}\label{g:proof:idm259}
Positivity and homogeneity of the {projection norm} \(\norm{\cdot}_d\) follow immediately from positivity and homogeneity of the {homogeneous distance} \(d\). For the triangle inequality, let \(X,X'\in V_1\) and let \(Y\in [\mathfrak{g},\mathfrak{g}]\) be the element given by the Baker-Campbell-Hausdorff formula such that
\begin{equation*}
\exp(X)\exp(X') = \exp(X+X'+Y)\text{.}
\end{equation*}
{Lemma~\ref{x:lemma:lemma-projection-is-submetry}} gives the bound \(\norm{X+X'}_d\leq d(e,\exp(X+X'+Y))\). By the choice of \(Y\), the left-invariance and triangle inequality of \(d\) conclude the claim:
\begin{equation*}
d(e,\exp(X+X'+Y))= d(e,\exp(X)\exp(X'))\leq \norm{X}_d+\norm{X'}_d\text{.}\qedhere
\end{equation*}

\end{proof}

\typeout{************************************************}
\typeout{Subsection 2.3 Length structures and sub-Finsler Carnot groups}
\typeout{************************************************}

\subsection{Length structures and sub-Finsler Carnot groups}\label{g:subsection:idm273}
\begin{definition}\label{x:definition:def-length-metric}
Let \((X,d)\) be a metric space. Let \(\Omega\) be the space of rectifiable curves of \(X\) and let \(\ell_d\colon\Omega\to \RR\) be the length functional. For points \(x,y\in X\), denote by \(\Omega(x,y)\subset\Omega\) the space of all rectifiable curves connecting the points \(x\) and \(y\). The \terminology{length metric associated with the metric \(d\)} is the map \(d_\ell\colon X\times X\to\RR\cup\{\infty\}\) defined by
\begin{equation*}
d_\ell(x,y) := \inf \{\ell_d(\gamma): \gamma\in \Omega(x,y)   \}\text{.}
\end{equation*}
If \(d=d_\ell\), then the metric \(d\) is called a \terminology{length metric}.
\end{definition}
See \cite[Section~2.3]{Burago-Burago-Ivanov-2001-metric_geometry} for further information about length structures induced by metrics. For the purposes of this paper, only the special case of the length metric associated with a homogeneous distance will be relevant. Such a length metric always determines a sub-Finsler Carnot group, see {Definition~\ref{x:definition:def-subfinsler}} and {Lemma~\ref{x:lemma:lemma-length-metric-of-homogeneous-distance-is-subfinsler}}.
\begin{definition}\label{x:definition:def-control}
Let \(G\) be a {stratified group}. Denote by \(L_g\colon G\to G\) the left-translation \(L_g(h)=gh\). An absolutely continuous curve \(\gamma\colon [0,T]\to G\) is a \terminology{horizontal curve} if \((L_{\gamma(t)^{-1}})_*\dot{\gamma}(t) \in V_1\) for almost every \(t\in [0,T]\). The \terminology{control} of a horizontal curve \(\gamma\) is its left-trivialized derivative, i.e.\@, the map
\begin{equation*}
u\colon [0,T]\to V_1,\quad u(t) = (L_{\gamma(t)^{-1}})_*\dot{\gamma}(t)\text{.}
\end{equation*}

\label{g:notation:idm312}
\label{g:notation:idm316}
\end{definition}
\begin{definition}\label{x:definition:def-subfinsler}
A \terminology{sub-Finsler Carnot group} is a {stratified group} \(G\) equipped with a norm \(\norm{\cdot}\colon V_1\to\RR\). The norm induces a {homogeneous distance} \(d_{SF}\) via the {length structure} induced by \(\norm{\cdot}\) over horizontal curves.

More explicitly, for a {horizontal curve} \(\gamma\colon[0,T]\to G\) with {control} \(u\colon [0,T]\to V_1\), define the length
\begin{equation*}
\ell_{\norm{\cdot}}(\gamma) = \int_{0}^{T}\norm{u(t)}\,dt\text{.}
\end{equation*}
For \(g,h\in G\), let \(\Omega(g,h)\) be the family of all horizontal curves connecting \(g\) and \(h\). The sub-Finsler distance \(d_{SF}\) is defined as
\begin{equation*}
d_{SF}(g,h) := \inf \{ \ell_{\norm{\cdot}}(\gamma): \gamma\in \Omega(g,h)   \}\text{.}
\end{equation*}

\end{definition}

\typeout{************************************************}
\typeout{Subsection 2.4 Geodesics and blowdowns}
\typeout{************************************************}

\subsection{Geodesics and blowdowns}\label{g:subsection:idm342}
\begin{definition}\label{x:definition:def-geodesic}
Let \(G\) be a {stratified group} equipped with a {homogeneous distance} \(d\). A \terminology{geodesic} is an isometric embedding \(\gamma\colon[0,T]\to (G,d)\). That is, a geodesic satisfies
\begin{equation*}
d(\gamma(t),\gamma(s)) = \abs{t-s}\quad\forall t,s\in[0,T]\text{.}
\end{equation*}

\end{definition}
In the proof of {Theorem~\ref{x:theorem:thm-isometric-embeddings}} it will be convenient to consider also curves which preserve distances up to a constant factor. A curve \(\gamma\colon[0,T]\to (G,d)\) for which there exists some constant \(C>0\) such that
\begin{equation*}
d(\gamma(t),\gamma(s)) = C\abs{t-s}\quad\forall t,s\in[0,T]
\end{equation*}
will be called a \terminology{geodesic with speed \(C\)}.
\begin{lemma}\label{x:lemma:lemma-rescaled-control}
Let \(\gamma\colon [0,\infty)\to G\) be a horizontal curve with {control} \(u\colon [0,\infty)\to V_1\) and let \(\dilationfactor>0\) be a dilating factor. Then the dilated and reparametrized curve
\begin{equation*}
\gamma_{\dilationfactor} \colon [0,\infty)\to G,\quad \gamma_{\dilationfactor}(t):=\delta_{1/\dilationfactor}\gamma(\dilationfactor t)\text{,}
\end{equation*}
has the control
\begin{equation*}
u_{\dilationfactor}\colon [0,\infty)\to V_1,\quad u_{\dilationfactor}(t) := u(\dilationfactor t)\text{.}
\end{equation*}

\label{g:notation:idm370}
\end{lemma}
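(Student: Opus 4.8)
The plan is to compute the control of \(\gamma_\dilationfactor\) directly from its definition \(u_\dilationfactor(t) = (L_{\gamma_\dilationfactor(t)^{-1}})_*\dot\gamma_\dilationfactor(t)\), exploiting that the dilation \(\delta_{1/\dilationfactor}\) is a Lie group automorphism. Writing \(\mu = 1/\dilationfactor\) for brevity, the chain rule applied to \(\gamma_\dilationfactor(t) = \delta_\mu(\gamma(\dilationfactor t))\) gives \(\dot\gamma_\dilationfactor(t) = \dilationfactor\,(\delta_\mu)_*\dot\gamma(\dilationfactor t)\), where the scalar \(\dilationfactor\) arises from the reparametrization \(t\mapsto \dilationfactor t\) and \((\delta_\mu)_*\) denotes the differential of the dilation at the point \(\gamma(\dilationfactor t)\).

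The key step is to commute the left translation past the dilation. Since \(\delta_\mu\) is a homomorphism, \(\gamma_\dilationfactor(t)^{-1} = \delta_\mu(\gamma(\dilationfactor t)^{-1})\), and the automorphism identity \(L_{\delta_\mu(g)}\circ\delta_\mu = \delta_\mu\circ L_g\) holds for every \(g\in G\). Differentiating this identity at \(\gamma(\dilationfactor t)\) with \(g = \gamma(\dilationfactor t)^{-1}\) yields \((L_{\gamma_\dilationfactor(t)^{-1}})_*(\delta_\mu)_* = (\delta_\mu)_{*,e}(L_{\gamma(\dilationfactor t)^{-1}})_*\), and hence \(u_\dilationfactor(t) = \dilationfactor\,(\delta_\mu)_{*,e}\,(L_{\gamma(\dilationfactor t)^{-1}})_*\dot\gamma(\dilationfactor t) = \dilationfactor\,(\delta_\mu)_{*,e}\,u(\dilationfactor t)\).

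Finally I would invoke the explicit form of the dilation from {Definition~\ref{x:definition:def-dilation}}: its differential \((\delta_\mu)_{*,e}\) acts on each layer \(V_k\) by multiplication by \(\mu^k\), so on \(V_1\) it is simply multiplication by \(\mu = 1/\dilationfactor\). Because \(\gamma\) is horizontal, \(u(\dilationfactor t)\in V_1\), so \((\delta_\mu)_{*,e}\,u(\dilationfactor t) = \tfrac{1}{\dilationfactor}u(\dilationfactor t)\), and the two scalar factors cancel to give \(u_\dilationfactor(t) = \dilationfactor\cdot\tfrac{1}{\dilationfactor}u(\dilationfactor t) = u(\dilationfactor t)\). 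As a byproduct this shows that \(\gamma_\dilationfactor\) is itself horizontal, its control taking values in \(V_1\). I expect no genuine obstacle: the argument is a chain-rule computation, and the only point demanding care is the bookkeeping of base points when differentiating the automorphism identity, which is routine.
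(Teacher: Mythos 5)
Your proof is correct and follows essentially the same route as the paper's: a chain-rule computation combined with the homomorphism property of \(\delta_{1/\dilationfactor}\) to commute the dilation past the left translation, with the factor \(\dilationfactor\) from reparametrization cancelling against the action of \((\delta_{1/\dilationfactor})_{*,e}\) by \(1/\dilationfactor\) on \(V_1\). The paper compresses all of this into a single displayed line, whereas you make the base-point bookkeeping and the automorphism identity \(L_{\delta_\mu(g)}\circ\delta_\mu = \delta_\mu\circ L_g\) explicit, but the argument is the same.
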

\begin{proof}\label{g:proof:idm374}
Since the dilations are group homomorphisms, the claim follows directly by the chain rule and {Definition~\ref{x:definition:def-control}} of a control:
\begin{equation*}
\frac{d}{dt}\gamma_{\dilationfactor}(t)
= (\delta_{1/\dilationfactor})_*\frac{d}{dt}\gamma(\dilationfactor t)
= (\delta_{1/\dilationfactor})_*(L_{\gamma(\dilationfactor t)})_*u(\dilationfactor t)\dilationfactor
= (L_{\gamma_{\dilationfactor}(t)})_*u_{\dilationfactor}(t)\text{.}\qedhere
\end{equation*}

\end{proof}
\begin{definition}\label{x:definition:def-blowdown}
Let \(\gamma\colon[0,\infty)\to G\) be a horizontal curve. Suppose for some sequence of scales \(\dilationfactor_k\to\infty\) the pointwise limit
\begin{equation*}
\tilde{\gamma}\colon[0,\infty)\to G,\quad \tilde{\gamma}(t) = \lim\limits_{k\to\infty}\gamma_{\dilationfactor_k}(t) = \lim\limits_{k\to\infty}\delta_{1/\dilationfactor_k}\gamma(\dilationfactor_k t)
\end{equation*}
exists. Such a curve \(\tilde{\gamma}\) is called a \terminology{blowdown} of the curve \(\gamma\) along the sequence of scales \(\dilationfactor_k\).
\end{definition}
\begin{remark}\label{g:remark:idm388}
If the curve \(\gamma\) is \(L\)-Lipschitz, then the curves \(\gamma_\dilationfactor\) are also all \(L\)-Lipschitz. Hence by Arzelà-Ascoli, up to taking a subsequence a blowdown along a sequence of scales will always exist.\end{remark}
\begin{lemma}\label{x:lemma:lemma-blowdown-properties}
Let \(\gamma\colon [0,\infty)\to G\) be an infinite {geodesic} and let \(\tilde{\gamma} = \lim\limits_{k\to\infty}\gamma_{\dilationfactor_k}\) be any {blowdown} of the curve \(\gamma\). Let \(u\) and \(\tilde{u}\) be the {controls} of the curves \(\gamma\) and \(\tilde{\gamma}\) respectively. Then
\begin{enumerate}[label=(\roman*)]
\item\label{x:li:lemma-blowdown-geodesic}The curve \(\tilde{\gamma}\) is an infinite geodesic.
\item\label{x:li:lemma-blowdown-control-convergence}Up to taking a subsequence, the {dilated controls} \(u_{\dilationfactor_k}\) converge to the control \(\tilde{u}\) in \(\Lloc([0,\infty);V_1)\).
\end{enumerate}

\end{lemma}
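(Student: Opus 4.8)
The plan is to prove the two assertions separately: \ref{x:li:lemma-blowdown-geodesic} by a soft continuity argument, and \ref{x:li:lemma-blowdown-control-convergence} by weak compactness followed by an identification of the weak limit through the explicit step-2 flow.

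For \ref{x:li:lemma-blowdown-geodesic}, I would first record that every rescaling \(\gamma_{\dilationfactor}\) is already an infinite geodesic. Indeed, by homogeneity of \(d\) ({Definition~\ref{x:definition:def-homogeneous-distance}}),
\[
d(\gamma_{\dilationfactor}(t),\gamma_{\dilationfactor}(s))=d(\delta_{1/\dilationfactor}\gamma(\dilationfactor t),\delta_{1/\dilationfactor}\gamma(\dilationfactor s))=\tfrac{1}{\dilationfactor}d(\gamma(\dilationfactor t),\gamma(\dilationfactor s))=\tfrac{1}{\dilationfactor}\abs{\dilationfactor t-\dilationfactor s}=\abs{t-s}.
\]
Since \(\tilde{\gamma}(t)=\lim_{k}\gamma_{\dilationfactor_k}(t)\) is a pointwise limit ({Definition~\ref{x:definition:def-blowdown}}) and \(d\) is jointly continuous, passing to the limit in the identity above yields \(d(\tilde{\gamma}(t),\tilde{\gamma}(s))=\abs{t-s}\) for all \(s,t\), so \(\tilde{\gamma}\) is an infinite geodesic (and in particular a finite-length, hence horizontal, curve with a well-defined control \(\tilde{u}\)).

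For \ref{x:li:lemma-blowdown-control-convergence}, the first step is a uniform bound. Because \(\gamma\) is an isometric embedding into the length space \((G,d_{SF})\), the restriction of \(\gamma\) to any subinterval has length equal to the distance between its endpoints; additivity of length then forces \(\int_s^t\norm{u}=\abs{t-s}\) for all \(s\lt t\), whence \(\norm{u(t)}=1\) for a.e.\ \(t\). By {Lemma~\ref{x:lemma:lemma-rescaled-control}} the dilated controls \(u_{\dilationfactor_k}(t)=u(\dilationfactor_k t)\) also have unit norm a.e., so they are uniformly bounded in \(L^\infty\), hence bounded in \(L^2\) on each compact interval. A diagonal argument over reflexive spaces \(L^2([0,T];V_1)\) then extracts a subsequence with \(u_{\dilationfactor_k}\rightharpoonup v\) weakly in \(\Lloc([0,\infty);V_1)\) for some limit \(v\). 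It remains to identify \(v\) with \(\tilde{u}\). After a harmless left translation bringing the basepoint to \(e\) (which does not change controls, and under which \(\gamma_{\dilationfactor_k}(0)\to e\)), I would use the step-2 structure: writing \(\mathfrak{g}=V_1\oplus V_2\) in exponential coordinates, a horizontal curve with control \(w\) and \(\gamma(0)=e\) is
\[
\exp\Big(\int_0^t w(\tau)\,d\tau+\tfrac12\int_0^t\big[\textstyle\int_0^\tau w(\sigma)\,d\sigma,\,w(\tau)\big]\,d\tau\Big).
\]
The horizontal primitives \(t\mapsto\int_0^t u_{\dilationfactor_k}\) are uniformly Lipschitz and converge pointwise (test weak convergence against indicators of \([0,t]\)), hence converge uniformly on compacts to \(t\mapsto\int_0^t v\). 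Splitting the bracket integrand \([\int_0^\tau u_{\dilationfactor_k},u_{\dilationfactor_k}(\tau)]\) as \([\int_0^\tau u_{\dilationfactor_k}-\int_0^\tau v,\,u_{\dilationfactor_k}(\tau)]+[\int_0^\tau v,\,u_{\dilationfactor_k}(\tau)]\), the first term vanishes in the limit by Cauchy–Schwarz (uniform smallness of the first factor against the \(L^2\)-bound on \(u_{\dilationfactor_k}\)), while the second is a fixed bounded linear functional of \(u_{\dilationfactor_k}\) and passes to the limit by weak convergence. Thus \(\gamma_{\dilationfactor_k}(t)\) converges pointwise to the trajectory with control \(v\); as this limit is \(\tilde{\gamma}\) by definition, we conclude \(\tilde{u}=v\), giving the claimed convergence.

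The main obstacle is exactly this identification of the weak limit in the vertical direction: weak convergence of controls does not by itself survive the bilinear bracket, and the argument succeeds only because the horizontal primitives can be upgraded to \emph{strong} (uniform) convergence, making the remaining bracket a weak-times-strong product. This is precisely where the step-2 hypothesis is used, since it keeps the flow affine-bilinear in the control and confines the entire nonlinearity to this single tractable term; in higher step the corresponding formula would involve iterated integrals of the weakly converging control, for which no such passage to the limit is available.
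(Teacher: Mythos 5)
Your part (i) is correct and is essentially the paper's argument: the paper disposes of it in one line (``a pointwise limit of geodesics is a geodesic''), of which your homogeneity computation showing each \(\gamma_{\dilationfactor}\) is a geodesic is the implicit first half. Your treatment of (ii) up to the identification of the weak limit is also sound, and in fact more self-contained than the paper's, which delegates exactly that point (``the definitions of control and weak convergence imply that \(v\) is a control for \(\tilde{\gamma}\)'') to a citation of Monti--Pigati--Vittone; your step-2 development formula, the uniform convergence of the horizontal primitives, and the weak-times-strong splitting of the bracket term are a correct way to carry it out.

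There is, however, a genuine gap at the very end. Statement \ref{x:li:lemma-blowdown-control-convergence} asserts convergence \(u_{\dilationfactor_k}\to\tilde{u}\) \emph{in} \(\Lloc([0,\infty);V_1)\), i.e.\ norm convergence, whereas your argument produces only weak convergence \(u_{\dilationfactor_k}\rightharpoonup\tilde{u}\); the closing phrase ``giving the claimed convergence'' is a non sequitur. The difference is not cosmetic: the proof of Theorem~\ref{x:theorem:thm-infinite-geodesics-are-lines} uses (ii) to extract, along a further subsequence, the a.e.\ pointwise convergence \(u(\dilationfactor_k t)\to Y\) of {(\ref{x:men:eq-Y-as-limit})}, which weak convergence cannot supply because of persistent oscillation. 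Concretely, in \((\RR^2,\ell^\infty)\) the curve with control \(u(t)=(1,\operatorname{sign}\sin(2\pi t))\) is a geodesic whose dilated controls converge weakly, but along no subsequence strongly, to the blowdown control \((1,0)\) — so an upgrade to strong convergence needs an actual argument, not just bookkeeping. The paper closes this with the one ingredient you never invoke: since \(\gamma\) and \(\tilde{\gamma}\) are both unit-speed geodesics, \(\norm{u_{\dilationfactor_k}(t)}\equiv 1\equiv\norm{\tilde{u}(t)}\) a.e., hence the \(L^2\) norms over every compact interval coincide, and weak convergence together with convergence of norms upgrades to strong convergence by the Radon--Riesz property — immediate in the sub-Riemannian (Hilbert) case, and valid for a strictly convex norm on \(V_1\) since \(L^2([0,T];(V_1,\norm{\cdot}))\) is then uniformly convex; the \(\ell^\infty\) example above shows some convexity is genuinely needed at this step, consistent with where the lemma is applied. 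Your final paragraph accordingly misplaces the crux of (ii): the bracket identification you highlight is the routine part, while the step that carries the weight for the main theorem is this weak-to-strong upgrade via constant speed, which is absent from your proof.
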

\begin{proof}\label{g:proof:idm414}
\textbf{(i).}
The curve \(\tilde{\gamma}\) is a geodesic as the pointwise limit of geodesics.

\textbf{(ii).}
The claim follows from \cite[Remark~3.13]{Monti-Pigati-Vittone-2018-tangent_cones}. The point is that by weak compactness of closed balls in \(\Lloc([0,\infty);V_1)\) there exists a weakly convergent subsequence \(u_\dilationfactor \rightharpoonup v\) to some \(v\in\Lloc([0,\infty);V_1)\). The definitions of control and weak convergence imply that \(v\) is a control for \(\tilde{\gamma}\), so in particular \(\tilde{u}(t)=v(t)\) for almost every \(t\). Finally, the geodesic assumption implies that \(\norm{u(t)}\equiv 1 \equiv \norm{\tilde{u}(t)}\), so the weak convergence is upgraded to strong convergence \(u_{\dilationfactor}\to \tilde{u}\) in \(\Lloc([0,\infty);V_1)\).

\end{proof}
\begin{lemma}\label{x:lemma:lemma-existence-of-blowdown-line}
Let \(G\) be a {sub-Finsler Carnot group} with a strictly convex norm and let \(\gamma\colon [0,\infty)\to G\) be an infinite {geodesic}. Then there exists a sequence \(\dilationfactor_k\to\infty\) such that the {blowdown} \(\tilde{\gamma} = \lim\limits_{k\to\infty}\gamma_{\dilationfactor_k}\) is affine.
\end{lemma}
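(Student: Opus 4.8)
The plan is to reduce the statement to producing a blowdown whose control is almost everywhere constant, and then to extract such a blowdown from the asymptotic behaviour of the controls together with strict convexity. First I would normalise $\gamma(0)=e$ by left-invariance. By {Remark~\ref{g:remark:idm388}} and {Lemma~\ref{x:lemma:lemma-blowdown-properties}}, along any $\dilationfactor_k\to\infty$ a subsequence produces a blowdown $\tilde\gamma$ which is again an infinite geodesic, whose control $\tilde u$ satisfies $\norm{\tilde u(t)}\equiv 1$, and to which the dilated controls converge in $\Lloc$. The decisive reduction is that an infinite geodesic is affine exactly when it is a one-parameter subgroup, that is, exactly when its control is a.e.\ constant. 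So it suffices to exhibit a single blowdown with constant control.

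Next I would project to the first layer. By {Lemma~\ref{x:lemma:lemma-projection-is-submetry}} the horizontal projection $\pi$ is a submetry onto the strictly convex normed space $(V_1,\norm{\cdot})$, and $c:=\pi\circ\gamma$ is the curve $t\mapsto\int_0^t u$, which is $1$-Lipschitz with $\norm{\dot c}\equiv 1$; the same applies to $\tilde c:=\pi\circ\tilde\gamma$, where $\tilde c(t)=\int_0^t\tilde u$. This is where strict convexity performs the lifting: if some blowdown satisfies $\tilde c(t)=tX_0$ with $\norm{X_0}=1$, then $\norm{\int_0^t\tilde u}=t=\int_0^t\norm{\tilde u}$, and equality in the triangle inequality for a strictly convex norm forces $\tilde u\equiv X_0$ a.e., so $\tilde\gamma$ is affine. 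The entire problem is thereby reduced to producing a blowdown whose horizontal projection is a unit-speed ray.

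To locate such scales I would track the asymptotic horizontal speed $L:=\limsup_{t\to\infty}\norm{c(t)}/t\in[0,1]$, choose $\dilationfactor_k\to\infty$ with $c(\dilationfactor_k)/\dilationfactor_k\to X_0$ where $\norm{X_0}=L$, and blow down along $\dilationfactor_k$. Since $\pi$ intertwines with dilations, $\tilde c(t)=\lim_k\tfrac{1}{\dilationfactor_k}c(\dilationfactor_k t)$, and a direct limsup computation gives $\norm{\tilde c(t)}\le Lt$ for all $t$, with equality at $t=1$. The line case of the previous paragraph is then precisely the assertion $L=1$ together with $\norm{\tilde c(t)}=t$ for all $t$.

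The main obstacle is exactly this last point: that some blowdown attains full horizontal speed. A soft estimate is available but insufficient: writing $\gamma(t)=\exp(c(t)+z(t))$ in step $2$, one has $z(t)=\tfrac12\int_0^t[c,\dot c]$ and, by the ball-box estimate, $t=d(e,\gamma(t))\le\norm{c(t)}+C\norm{z(t)}^{1/2}$ with $\norm{z(t)}\le C't^2$, which only yields $L>0$. Upgrading this to $L=1$ is the heart of the matter and is where the asymptotic study of the optimal controls enters: via {Proposition~\ref{x:proposition:prop-step-2-PMP}}, taking integral averages of the controls shows that any blowdown control lies in the kernel of the bilinear form $B$, and strict convexity then selects the unique admissible direction, forcing $\tilde u$ to be constant. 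Thus the soft reductions above isolate the genuinely hard step, namely the control-theoretic asymptotics carried out in the later sections, while strict convexity is what converts that asymptotic information into affineness.
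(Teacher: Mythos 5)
Your soft reductions are correct as far as they go: blowdowns exist and are geodesics ({Lemma~\ref{x:lemma:lemma-blowdown-properties}}), affinity is equivalent to a.e.\ constant control, and strict convexity does upgrade a unit-speed abelianized ray to a constant control via equality in the triangle inequality. But the step you yourself identify as the heart of the matter is not proved, and the way you propose to fill it is both circular and false as stated. Circular: the claim that ``any blowdown control lies in $\ker B$, and strict convexity then selects the unique admissible direction, forcing $\tilde{u}$ to be constant'' is precisely the argument of the paper's proof of {Theorem~\ref{x:theorem:thm-infinite-geodesics-are-lines}} in Section~5 --- and that proof \emph{begins} by invoking the very lemma you are asked to prove, because the strict-convexity step there needs a fixed vector $Y=\tilde{u}(t)$ (a constant blowdown control) in order to extract the invariant $a(t)Y\equiv C$ from {PMP~\ref{x:proposition:prop-step-2-PMP}}\ref{x:li:prop-step-2-PMP-ode} and then compare $a(t)Y$ with $a(t)u(t)$. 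False as stated: membership $\tilde{u}(t)\in\ker B$ by itself selects nothing, since $\ker B$ is a subspace that can be large; for instance, if the vertical components $a_{r+1},\dots,a_n$ of the dual curve vanish, then $B=0$ and $\ker B=V_1$, so the kernel condition is vacuous, and strict convexity of the norm places no restriction on which unit vectors lie in $\ker B$. (A PMP-based proof of the lemma could in principle be salvaged by running the Section~5 subdifferential argument pointwise for a possibly non-constant blowdown control, using that $s\mapsto a(s)\tilde{u}(t)$ is constant for a.e.\ fixed $t$; but your proposal does not do this, and it is a genuinely different argument from citing $\ker B$ plus strict convexity.)

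The paper's actual proof is independent of the PMP and of Sections~3--5. Strict convexity enters only to observe that if $\gamma$ is not affine, its horizontal projection has non-constant derivative and hence is not a geodesic in the strictly convex normed space $G/[G,G]$; the external result \cite[Theorem~1.4]{Hakavuori-Le_Donne-2018-cones} then places all blowdowns of $\gamma$ inside a proper Carnot subgroup $H \lt G$ of strictly lower rank. Since a blowdown $\beta$ is itself an infinite geodesic by {Lemma~\ref{x:lemma:lemma-blowdown-properties}}, and blowdowns of $\beta$ are blowdowns of $\gamma$ by a diagonal argument, induction on the rank terminates at a rank-one Carnot subgroup, i.e.\ at an affine blowdown. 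This rank-reduction ingredient and the induction are entirely absent from your proposal; without them (or a non-circular completion of the control-theoretic route) you have established only the reductions, not the lemma. A minor further point: your ball-box estimate does not even yield $L>0$, since it reads $t\leq\norm{c(t)}+C''t$ with a constant $C''$ that need not be less than $1$ --- though you discard that estimate anyway.
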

\begin{proof}\label{g:proof:idm445}
If the geodesic \(\gamma\) is itself affine, then the claim is immediate. Suppose then that \(\gamma\) is not affine, i.e.\@, not a left translation of a one parameter subgroup. In particular, the geodesic \(\gamma\) has non-constant {control}. Hence the horizontal projection \(\pi\circ\gamma\colon [0,\infty)\to G/[G,G]\) has non-constant derivative, and is also not affine. Since \(G/[G,G]\) is a normed space with a strictly convex norm, the projection curve \(\pi\circ\gamma\) cannot be a geodesic. Then \cite[Theorem~1.4]{Hakavuori-Le_Donne-2018-cones} states that there exists a Carnot subgroup \(H \lt G\) of lower rank such that all blowdowns of the geodesic \(\gamma\) are contained in \(H\).

Let the curve \(\beta\colon [0,\infty)\to H \lt G\) be any blowdown. By {Lemma~\ref{x:lemma:lemma-blowdown-properties}}\ref{x:li:lemma-blowdown-geodesic}, \(\beta\) is a geodesic. If \(\beta\) is also not affine, then iterating the above there exists a Carnot subgroup \(K \lt H \lt G\) of even lower rank such that all blowdowns of \(\beta\) are in \(K\). Blowdowns of the geodesic \(\beta\) are also blowdowns of the geodesic \(\gamma\) by a diagonal argument, so the claim follows by induction, since a Carnot subgroup of rank 1 is just a one parameter subgroup.
\end{proof}

\typeout{************************************************}
\typeout{Subsection 2.5 Subdifferentials}
\typeout{************************************************}

\subsection{Subdifferentials}\label{x:subsection:subsec-subdifferentials}
In this section, let \(V\) be some fixed finite dimensional vector space and let \(E\colon V\to\RR\) be a convex continuous function. In the application in {Section~\ref{x:section:sec-infinite-geodesics-proof-conclusion}}, the space \(V\) will be the horizontal layer \(V_1\subset\mathfrak{g}\), and the convex function of interest will be a squared norm \(\frac{1}{2}\norm{\cdot}^2\).
\begin{definition}\label{x:definition:def-subdifferential}
A linear function \(a\colon V\to\RR\) is a \terminology{subdifferential} of the function \(E\) at a point \(Y\in V\) if
\begin{equation*}
a(X-Y)\leq E(X)-E(Y)\quad\forall X\in V\text{.}
\end{equation*}
The collection of all subdifferentials \(a\) at a point \(Y\in V\) is denoted \(\partial E(Y)\subset V^*\).
\label{g:notation:idm490}
\end{definition}
The following lemmas condense the properties of convex functions and their subdifferentials that will be relevant for the article. They will be utilized in the proof of {Theorem~\ref{x:theorem:thm-infinite-geodesics-are-lines}} in {Section~\ref{x:section:sec-infinite-geodesics-proof-conclusion}}. The first lemma is the continuity and compactness of subdifferentials.
\begin{lemma}\label{x:lemma:lemma-continuity-of-subdifferentials}
Let \(Y_k\to Y\in V\) be a converging sequence and let \(a_k\in\partial E(Y_k)\). Then there exists a converging subsequence \(a_k\to a\in\partial E(Y)\).
\end{lemma}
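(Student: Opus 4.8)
The plan is to prove the two assertions of Lemma~\ref{x:lemma:lemma-continuity-of-subdifferentials} together: that the sequence $(a_k)$ is bounded (hence has a convergent subsequence), and that any such limit $a$ lies in $\partial E(Y)$. Both facts will come directly from the defining inequality
\begin{equation*}
a_k(X-Y_k)\leq E(X)-E(Y_k)\quad\forall X\in V,
\end{equation*}
combined with the continuity of the convex function $E$.

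\textbf{Boundedness.} First I would show that $(a_k)\subset V^*$ is a bounded sequence. Fix an auxiliary inner product and basis on $V$. For a unit vector $w\in V$, evaluate the subdifferential inequality at $X=Y_k+w$ to obtain $a_k(w)\leq E(Y_k+w)-E(Y_k)$; applying the same with $-w$ in place of $w$ gives $-a_k(w)=a_k(-w)\leq E(Y_k-w)-E(Y_k)$, so that
\begin{equation*}
\abs{a_k(w)}\leq \max\bigl\{E(Y_k+w)-E(Y_k),\,E(Y_k-w)-E(Y_k)\bigr\}.
\end{equation*}
Since $Y_k\to Y$, the points $Y_k$ stay in a compact set, and by continuity of $E$ the values $E(Y_k\pm w)$ for $w$ ranging over the unit sphere stay uniformly bounded (the set $\{Y_k\pm w : k\in\NN,\ \abs{w}=1\}$ is bounded, and a continuous convex function is bounded on bounded sets). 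Taking the supremum over unit $w$ bounds the operator norms $\norm{a_k}$ uniformly in $k$. By finite dimensionality of $V^*$ and the Bolzano–Weierstrass theorem, a subsequence converges to some $a\in V^*$.

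\textbf{Passing to the limit.} Along this subsequence I would take $k\to\infty$ in the inequality $a_k(X-Y_k)\leq E(X)-E(Y_k)$ for each fixed $X\in V$. The left side converges to $a(X-Y)$ because $a_k\to a$ in $V^*$ and $Y_k\to Y$ (the pairing is jointly continuous), and the right side converges to $E(X)-E(Y)$ by continuity of $E$. This yields $a(X-Y)\leq E(X)-E(Y)$ for all $X\in V$, which is exactly the statement that $a\in\partial E(Y)$, completing the proof.

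The only mild subtlety — and hence the main point to get right — is the uniform boundedness step: one must ensure the bound on $\abs{a_k(w)}$ is uniform simultaneously in $k$ and in the direction $w$, rather than merely pointwise in $w$ for each fixed $k$. This is guaranteed because local boundedness of a finite-dimensional convex function is in fact local Lipschitzness, so $E$ is Lipschitz on a neighborhood of the compact set $\{Y\}\cup\{Y_k\}$; the resulting Lipschitz constant bounds every $\norm{a_k}$ at once. Everything else is a routine limit in the defining inequality.
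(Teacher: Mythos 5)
Your proof is correct, and it takes a more elementary, self-contained route than the paper, which disposes of the lemma by two citations to Rockafellar's \emph{Convex Analysis}: Theorem~24.7, giving compactness of \(\partial E(\mathcal{S})=\bigcup_{X\in\mathcal{S}}\partial E(X)\) over the compact set \(\mathcal{S}=\{Y_k:k\in\NN\}\cup\{Y\}\) (which yields the convergent subsequence), and Theorem~24.4, giving closedness of the graph of \(\partial E\) (which yields \(a\in\partial E(Y)\)). Your two steps are precisely direct proofs of these two cited facts in the finite-dimensional setting. The boundedness step is sound: from \(\abs{a_k(w)}\leq\max\bigl\{E(Y_k+w)-E(Y_k),\,E(Y_k-w)-E(Y_k)\bigr\}\) you need \(E(Y_k\pm w)\) bounded above uniformly in \(k\) and unit \(w\) (true, since \(\{Y_k\pm w\}\) is a bounded set and a continuous function on a finite-dimensional space is bounded on bounded sets) and \(E(Y_k)\) bounded below uniformly (true, since \(\mathcal{S}\) is compact); you correctly flag this uniformity as the one delicate point, and your appeal to local Lipschitzness of finite-dimensional convex functions, while a valid alternative justification, is in fact redundant given the boundedness argument you already gave. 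The limit step is routine as you say: \(a_k(X-Y_k)\to a(X-Y)\) by joint continuity of the pairing, and \(E(Y_k)\to E(Y)\) by continuity, so the subdifferential inequality passes to the limit for each fixed \(X\). What your approach buys is independence from external convex-analysis machinery, using only Bolzano--Weierstrass and continuity, which is arguably preferable for a two-line lemma in a paper whose ambient space \(V=V_1\) is finite-dimensional anyway; what the paper's approach buys is brevity and a statement that is organized around standard, reusable results (compactness of the subdifferential map over compacta and closedness of its graph).
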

\begin{proof}\label{g:proof:idm504}
\cite[Theorem~24.7]{Rockafellar-1970-convex_analysis} shows (among other things) that since the set of points \(\mathcal{S}=\{Y_i: i\in\NN\}\cup \{Y\}\) is compact, the family of {subdifferentials}
\begin{equation*}
\partial E(\mathcal{S}):= \bigcup_{X\in \mathcal{S}}\{a\in \partial E(X) \}
\end{equation*}
is also compact. Hence there exists a converging subsequence \(a_k\to a\in\partial E(\mathcal{S})\). The claim is concluded by \cite[Theorem~24.4]{Rockafellar-1970-convex_analysis}, which shows that the convergences \(Y_k\to Y\) and \(a_k\to a\) with \(a_k\in\partial E(Y_k)\) imply that \(a\in\partial E(Y)\).
\end{proof}
The second lemma is a simple estimate on a subdifferential of the squared norm.
\begin{lemma}\label{x:lemma:lemma-subdifferential-of-norm}
Let \(\norm{\cdot}\) be a norm on \(V\) and let \(a\colon V\to\RR\) be a {subdifferential} of the squared norm \(E=\frac{1}{2}\norm{\cdot}^2\) at a point \(Y\in V\). Then \(\abs{a(X)}\leq \norm{X}\norm{Y}\) for all \(X\in V\), and \(a(Y)=\norm{Y}^2\).
\end{lemma}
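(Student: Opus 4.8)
The plan is to exploit the defining inequality of the subdifferential, namely $a(X-Y)\leq E(X)-E(Y)$, which by linearity of $a$ rewrites as $a(X)-a(Y)\leq \tfrac12\norm{X}^2-\tfrac12\norm{Y}^2$ for all $X\in V$. Both claimed facts will be extracted by testing this single inequality against suitable rescalings of $X$ and $Y$.

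First I would compute $a(Y)$ by substituting $X=tY$ for $t\in\RR$. Linearity of $a$ gives $(t-1)a(Y)\leq \tfrac12(t^2-1)\norm{Y}^2$, which factors as $(t-1)\bigl(\tfrac12(t+1)\norm{Y}^2-a(Y)\bigr)\geq 0$. Letting $t\to 1^+$ (where $t-1>0$) forces $a(Y)\leq \norm{Y}^2$, while letting $t\to 1^-$ (where $t-1<0$) forces the reverse inequality, so $a(Y)=\norm{Y}^2$.

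Next I would feed $a(Y)=\norm{Y}^2$ back into the subdifferential inequality to obtain the crude quadratic bound $a(X)\leq \tfrac12\norm{X}^2+\tfrac12\norm{Y}^2$. The sharp estimate is then recovered by a scaling step: replacing $X$ by $sX$ with $s>0$ and using linearity yields $a(X)\leq \tfrac{s}{2}\norm{X}^2+\tfrac{1}{2s}\norm{Y}^2$, and minimizing the right-hand side over $s>0$ (by the arithmetic--geometric mean inequality, optimally at $s=\norm{Y}/\norm{X}$) gives $a(X)\leq \norm{X}\norm{Y}$. Applying this to $-X$ and using $a(-X)=-a(X)$ produces the matching lower bound, so $\abs{a(X)}\leq \norm{X}\norm{Y}$; the degenerate case $X=0$ is immediate.

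The argument is elementary and I expect no serious obstacle. The only point requiring a little care is the scaling optimization in the last step, which is exactly what upgrades the naive bound $\tfrac12\norm{X}^2+\tfrac12\norm{Y}^2$ to the sharp bilinear estimate $\norm{X}\norm{Y}$; everything else is a direct manipulation of the subdifferential inequality together with linearity of $a$.
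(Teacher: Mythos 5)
Your proof is correct, but it takes a genuinely different route from the paper's, both in order and in ingredients. The paper proves the two claims in the opposite order: it first tests the subdifferential inequality at the perturbed point \(Y+\epsilon X\) and bounds \(E(Y+\epsilon X)\leq\frac{1}{2}(\norm{Y}+\epsilon\norm{X})^2\) using the \emph{triangle inequality}, so that letting \(\epsilon\to0\) yields \(a(X)\leq\norm{X}\norm{Y}\) directly (with \(-X\) giving the matching lower bound); the equality \(a(Y)=\norm{Y}^2\) then needs only the one-sided test at \((1-\epsilon)Y\), because the upper half \(a(Y)\leq\norm{Y}^2\) is just the case \(X=Y\) of the estimate already proved. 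You instead establish the equality first via the two-sided ray test \(X=tY\) with \(t\to1^\pm\) (your \(t<1\) branch is essentially the paper's \((1-\epsilon)Y\) computation, and your \(t>1\) branch replaces the paper's appeal to the bound), and then recover the sharp estimate from the crude quadratic bound \(a(X)\leq\frac{1}{2}\norm{X}^2+\frac{1}{2}\norm{Y}^2\) by rescaling and AM--GM. That scaling step is what substitutes for the paper's use of subadditivity: your argument uses only the \emph{homogeneity} of the norm, never the triangle inequality, so it applies verbatim to \(\frac{1}{2}q^2\) for any nonnegative positively homogeneous \(q\) (a gauge need not be subadditive), a small gain in generality; the paper's perturbation argument is somewhat more direct but is tied to subadditivity. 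One cosmetic point: when \(Y=0\) your stated optimizer \(s=\norm{Y}/\norm{X}\) degenerates, but since \(a(X)\leq\frac{s}{2}\norm{X}^2+\frac{1}{2s}\norm{Y}^2\) holds for every \(s>0\), the infimum as \(s\to0^+\) still gives \(a(X)\leq 0=\norm{X}\norm{Y}\), so nothing breaks.
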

\begin{proof}\label{g:proof:idm529}
For any points \(X,Y\in V\) and any \(\epsilon>0\), the subdifferential condition \(a\in\partial E(Y)\) implies that
\begin{align*}
\epsilon a(X)
&= a(Y+\epsilon X - Y)
\leq E(Y+\epsilon X) - E(Y)\\
&\leq\frac{1}{2}\Big((\norm{Y}+\epsilon\norm{X})^2 - \norm{Y}^2\Big)
= \epsilon \norm{X}\norm{Y} + \frac{1}{2}\epsilon ^2\norm{X}^2\text{.}
\end{align*}
Letting \(\epsilon\to 0\) proves the bound \(a(X)\leq \norm{X}\norm{Y}\). Repeating the same consideration for \(-X\), gives the opposite bound \(-a(X)\leq \norm{X}\norm{Y}\).

For the equality \(a(Y)=\norm{Y}^2\), let \(\epsilon>0\), and observe that a similar computation as before shows that
\begin{align*}
-\epsilon a(Y) &=
a((1-\epsilon)Y-Y)
\leq E((1-\epsilon)Y)-E(Y)\\
&= \frac{1}{2}((1-\epsilon)^2-1)\norm{Y}^2
= (-\epsilon+\frac{1}{2}\epsilon^2)\norm{Y}^2\text{.}
\end{align*}
That is, \(a(Y)\geq (1 - \frac{1}{2}\epsilon)\norm{Y}^2\). The limit as \(\epsilon\to 0\) and the previous upper bound prove the claim.
\end{proof}

\typeout{************************************************}
\typeout{Section 3 Step 2 sub-Finsler Pontryagin Maximum Principle}
\typeout{************************************************}

\section{Step 2 sub-Finsler Pontryagin Maximum Principle}\label{x:section:sec-pmp}
In this section, the Pontryagin Maximum Principle will be rephrased in a convenient form for the purposes of {Theorem~\ref{x:theorem:thm-infinite-geodesics-are-lines}}. The precise statement to be proved is the following:
\begin{proposition}[Step 2 sub-Finsler PMP.]\label{x:proposition:prop-step-2-PMP}
Let \(G\) be a step 2 {sub-Finsler Carnot group} with an arbitrary norm \(\norm{\cdot}\colon V_1\to\RR\) and let \(0\leq T\leq \infty\). If \(u\colon [0,T]\to V_1\) is the {control} of a {geodesic}, then there exists an absolutely continuous curve \(a\colon [0,T]\to V_1^*\) and a skew-symmetric bilinear form \(B\colon V_1\times V_1\to\RR\) such that
\begin{enumerate}[label=(\roman*)]
\item\label{x:li:prop-step-2-PMP-ode}At almost every \(t\in[0,T]\), the curve \(a\) has the derivative
\begin{equation*}
\frac{d}{dt}a(t)Y = B(u(t),Y)\quad\forall Y\in V_1\text{.}
\end{equation*}

\item\label{x:li:prop-step-2-PMP-subdiff}At almost every \(t\in[0,T]\), the linear map \(a(t)\colon V_1\to\RR\) is a {subdifferential} of the squared norm \(\frac{1}{2}\norm{\cdot}^2\) at the point \(u(t)\in V_1\).
\end{enumerate}

\end{proposition}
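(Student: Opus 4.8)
The plan is to realize the geodesic as the minimizer of an optimal control problem on the Lie group and to apply the Pontryagin Maximum Principle, reading off \ref{x:li:prop-step-2-PMP-ode} from the adjoint equation and \ref{x:li:prop-step-2-PMP-subdiff} from the maximality condition. Since a {geodesic} is by definition a unit-speed isometric embedding, its {control} satisfies \(\norm{u(t)}\equiv 1\); working first on a compact interval (harmless, since sub-arcs of geodesics are geodesics and \(B\) is to be \(t\)-independent) one checks by Cauchy–Schwarz that at fixed endpoints and fixed time a constant-speed length minimizer also minimizes the energy \(\frac{1}{2}\int_0^T\norm{u}^2\,dt\). I would therefore phrase the problem as minimizing this energy for the left-invariant control system \(\dot{\gamma}(t)=(L_{\gamma(t)})_*u(t)\), \(u(t)\in V_1\), and invoke the PMP in its standard form.

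The heart of the computation is the adjoint equation in left-trivialized form. Writing \(\lambda(t)\in\mathfrak{g}^*\) for the left-translate of the costate and \(h_X(\lambda)=\langle\lambda,X\rangle\) for the Hamiltonian lift of a left-invariant field \(X\), the identity \(\{h_X,h_Y\}=h_{[X,Y]}\) for the canonical Poisson bracket (up to a fixed sign convention) gives, for the normal Hamiltonian \(H(\lambda,u)=h_u(\lambda)-\frac{1}{2}\norm{u}^2\),
\[
\frac{d}{dt}\langle\lambda(t),Y\rangle=\{H,h_Y\}(\lambda(t))=\langle\lambda(t),[u(t),Y]\rangle\qquad\forall Y\in\mathfrak{g}.
\]
Here the step-\(2\) hypothesis does the essential work: for \(Y\in V_2\) one has \([u(t),Y]=0\), so the component \(\restr{\lambda(t)}{V_2}\in V_2^*\) is constant in \(t\); and for \(Y\in V_1\) one has \([u(t),Y]\in V_2\), so the derivative depends only on this fixed vertical component. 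Setting \(a(t):=\restr{\lambda(t)}{V_1}\) and defining the \(t\)-independent form \(B(X,Y):=\langle\restr{\lambda}{V_2},[X,Y]\rangle\) yields exactly \ref{x:li:prop-step-2-PMP-ode}, and skew-symmetry of \(B\) is inherited from skew-symmetry of the bracket \(V_1\times V_1\to V_2\).

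For \ref{x:li:prop-step-2-PMP-subdiff}, the maximality condition of the PMP states that for almost every \(t\) the control \(u(t)\) is a global maximum over \(V_1\) of the concave map \(v\mapsto h_v(\lambda(t))-\frac{1}{2}\norm{v}^2=a(t)v-\frac{1}{2}\norm{v}^2\). This maximality reads \(a(t)(v-u(t))\leq\frac{1}{2}\norm{v}^2-\frac{1}{2}\norm{u(t)}^2\) for all \(v\), which is precisely the condition that \(a(t)\) be a {subdifferential} of \(E=\frac{1}{2}\norm{\cdot}^2\) at \(u(t)\) in the sense of {Definition~\ref{x:definition:def-subdifferential}}. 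The absolute continuity of \(a\) is inherited from that of the costate produced by the PMP, and for \(T=\infty\) I would obtain the statement on each \([0,n]\) and pass to the limit using compactness of the normalized initial covectors.

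The main obstacle is justifying that one may use a \emph{normal} extremal, i.e.\@ that the term \(-\frac{1}{2}\norm{v}^2\) is genuinely present (the abnormal multiplier does not vanish). In the abnormal case the maximality condition degenerates to requiring \(v\mapsto a(t)v\) to be bounded above on all of \(V_1\), forcing \(a(t)\equiv 0\); but by {Lemma~\ref{x:lemma:lemma-subdifferential-of-norm}} this is incompatible with \(\norm{u(t)}\equiv 1\), so a purely abnormal lift cannot satisfy \ref{x:li:prop-step-2-PMP-subdiff}. I would therefore need the fact that in a step-\(2\) {sub-Finsler Carnot group} every length minimizer admits a normal extremal lift (strictly abnormal minimizers do not occur in step \(2\)), which I would either cite or extract directly from the adjoint equation above. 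This normality point, rather than either displayed conclusion, is where the step-\(2\) assumption and the careful application of the PMP really concentrate.
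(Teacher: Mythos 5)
Your proposal is correct and follows essentially the same route as the paper: reduce length minimization to energy minimization, apply the PMP, left-trivialize the adjoint equation (your Poisson-bracket identity \(\{h_X,h_Y\}=h_{[X,Y]}\) is the same computation the paper cites from Agrachev--Sachkov in coordinates), use the step-2 grading to make the \(V_2\)-component of the costate constant --- which is exactly how the \(t\)-independent form \(B\) is defined --- read off the subdifferential condition from the maximality condition, handle \(T=\infty\) by the same finite-interval limiting argument, and exclude the abnormal case via the nonexistence of strictly abnormal extremals in step 2 (Goh condition), just as the paper does. The one inaccuracy is your suggestion that normality might be ``extracted directly from the adjoint equation'': the Goh condition is a second-order necessary criterion, not a consequence of the first-order adjoint system (abnormal maximality only forces \(\restr{\lambda(t)}{V_1}=0\), and one needs Goh to kill the \(V_2\)-component as well), so citing it --- your stated fallback and the paper's actual choice --- is the correct move.
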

\begin{remark}\label{g:remark:idm578}
Up to changing the optimal control \(u\) on a set of measure zero, the subdifferential condition \ref{x:li:prop-step-2-PMP-subdiff} may be taken to hold for all \(t\in[0,T]\).

Namely, if condition \ref{x:li:prop-step-2-PMP-subdiff} holds on a subset \(I\subset[0,T]\) of full measure, for any \(t\in [0,T]\setminus I\), pick any converging sequence \(I\ni t_k\to t\) such that the limit \(\lim_{k\to\infty}u(t_k)\) exists, and redefine \(u(t)=\lim_{k\to\infty}u(t_k)\). By the continuity of subdifferentials given by {Lemma~\ref{x:lemma:lemma-continuity-of-subdifferentials}}, it follows that \(a(t)\) is a subdifferential of the squared norm at the point \(u(t)\).
\end{remark}
\begin{remark}\label{g:remark:idm593}
In the sub-Riemannian case, the squared norm \(\frac{1}{2}\norm{\cdot}^2\) is differentiable at every point, and the unique subdifferential is the inner product \(a(t) = \left\lt u(t),\cdot\right\gt\). The derivative condition \ref{x:li:prop-step-2-PMP-ode} then gives the usual linear ODE of controls in the implicit form
\begin{equation*}
\left\lt \dot{u}(t),Y\right\gt = \frac{d}{dt}\left\lt u(t),Y\right\gt = B(u(t),Y)\quad\forall Y\in V_1\text{.}
\end{equation*}

\end{remark}

\typeout{************************************************}
\typeout{Subsection 3.1 General statement of the PMP}
\typeout{************************************************}

\subsection{General statement of the PMP}\label{g:subsection:idm599}
For the rest of {Section~\ref{x:section:sec-pmp}}, let \(G\) be a fixed {sub-Finsler Carnot group} of step 2 with an arbitrary norm \(\norm{\cdot}\colon V_1\to\RR\), and let \(u\colon[0,T]\to V_1\) be the {control} of a geodesic \(\gamma\colon [0,T]\to G\).

Consider first the finite time \(T\lt \infty\) case. By {Definition~\ref{x:definition:def-subfinsler}} of the sub-Finsler distance, the control \(u\) minimizes the length functional \(\int_{0}^{T}\norm{u(t)}\,dt\) among all controls defining curves with the same endpoints as \(\gamma\). Since a {geodesic} has by definition constant speed, it follows that \(u\) is also a minimizer of the energy functional \(\frac{1}{2}\int_{0}^{T}\norm{u(t)}^2\,dt\).

Define the left-trivialized Hamiltonian
\begin{equation}
h\colon V_1\times\RR\times\mathfrak{g}^*\to\RR,\quad h(u,\xi,\lambda) = \lambda(u) + \frac{1}{2}\xi\norm{u}^2\text{.}\label{x:men:eq-hamiltonian}
\end{equation}
By the Pontryagin Maximum Principle as presented in \cite[Theorem~12.10]{Agrachev-Sachkov-2004-control_theory_geometric_viewpoint}, the {control} \(u\colon[0,T]\to V_1\) can minimize the energy \(\frac{1}{2}\int_{0}^{T}\norm{u(t)}^2\,dt\) only if there is an everywhere non-zero absolutely continuous dual curve \(t\mapsto (\xi,\lambda(t))\in \RR\times T^*_{\gamma(t)}G\) such that
\begin{align}
\xi &\leq 0\label{x:mrow:PMP-normality}\\
\dot{\lambda} &= \vec{h}_{u(t),\xi}(\lambda)\quad\text{a.e. }t\in[0,T],\label{x:mrow:PMP-hamiltonian-flow}\\
h_{u(t),\xi}(\lambda(t)) &\geq h_{v,\xi}(\lambda(t))\quad\forall v\in V_1\quad\text{a.e. }t\in[0,T].\label{x:mrow:PMP-subdifferential}
\end{align}
Here \(h_{v,\xi}\) and \(\vec{h}_{v,\xi}\), for \(v\in V_1\), are the left-invariant Hamiltonian and the associated Hamiltonian vector field respectively.

More explicitly, \(h_{v,\xi}\colon T^*G\to\RR\) is the function defined from the left-trivialized Hamiltonian {(\ref{x:men:eq-hamiltonian})} in the natural way by
\begin{equation*}
h_{v,\xi}(\lambda)=h(v,\xi,L_{g}^*\lambda),\quad \forall \lambda\in T^*_gG\text{,}
\end{equation*}
and \(\vec{h}_{v,\xi}\) is the Hamiltonian vector field associated with the left-invariant Hamiltonian \(h_{v,\xi}\) by duality through the canonical symplectic form on the cotangent bundle, see \cite[Section~4]{Agrachev-Barilari-Boscain-2019-subriemannian_book} for more details within the context of the PMP in the sub-Riemannian setting.

Observe that if \((\xi,\lambda(t))\) is a dual curve satisfying the conditions {(\ref{x:mrow:PMP-normality})\textendash{}(\ref{x:mrow:PMP-subdifferential})} of the PMP, then also any scalar multiple \((C\xi,C\lambda(t))\) for any \(C\gt 0\) satisfies the conditions {(\ref{x:mrow:PMP-normality})\textendash{}(\ref{x:mrow:PMP-subdifferential})} of the PMP. This observation allows the infinite time case \(T=\infty\) to be handled as a limit of the finite time case. Namely, if \(u\colon [0,\infty)\to V_1\) is the {control} of a {geodesic}, then all its finite restrictions \(\restr{u}{[0,k]}\colon [0,k]\to V_1\) for \(k\in\NN\) are also controls of geodesics, so by the above they have corresponding dual curves \(t\mapsto (\xi_k,\lambda_k(t))\). By taking suitable rescalings of the \((\xi_k,\lambda_k)\), there exists a non-zero limit \((\xi_\infty,\lambda_\infty)\), which then satisfies the conditions {(\ref{x:mrow:PMP-normality})\textendash{}(\ref{x:mrow:PMP-subdifferential})} of the PMP on the entire interval \([0,\infty)\).

Condition {(\ref{x:mrow:PMP-normality})} is a binary condition \(\xi=0\) or \(\xi\neq 0\). The case \(\xi=0\) is the case of an abnormal {control} \(u\), and may be ignored in the step 2 setting, since the second order necessary criterion of the Goh condition (see e.g.\@ \cite[Section~20]{Agrachev-Sachkov-2004-control_theory_geometric_viewpoint}) implies that there are no strictly abnormal extremals in step 2. By rescaling \((\xi,\lambda)\) it therefore suffices to consider the normal case \(\xi=-1\).

\typeout{************************************************}
\typeout{Subsection 3.2 The PMP in left-trivialized coordinates}
\typeout{************************************************}

\subsection{The PMP in left-trivialized coordinates}\label{g:subsection:idm667}
Let \(X_1,\ldots,X_r\) be a basis of \(V_1\). Fix a basis \(X_{r+1},\ldots,X_n\) for \(V_2=[V_1,V_1]\) by choosing a maximal linearly independent subset of the Lie brackets \(\{[X_i,X_j]: 1\leq i\lt j\leq r\}\). By an abuse of notation, denote also by \(X_1,\ldots,X_n\), the corresponding left-invariant frame of \(TG\). Let \(\theta_1,\ldots,\theta_n\) be the dual left-invariant frame of \(T^*G\). Writing the curve \(\lambda(t)\) in left-trivialized coordinates as
\begin{equation*}
\lambda(t) = \sum_{i=1}^n\lambda_i(t)\theta_i(\gamma(t))\text{,}
\end{equation*}
the Hamiltonian ODE {(\ref{x:mrow:PMP-hamiltonian-flow})} in the normal case \(\xi=-1\) takes the simpler form
\begin{equation}
\dot{\lambda}_i(t) = \lambda(t)\Big(\Big[\sum_{j=1}^ru_j(t)X_j, X_i\Big](\gamma(t))\Big),\quad i=1,\dots,n\text{,}\label{x:men:eq-left-trivialized-hamiltonian-flow}
\end{equation}
see \cite[Section~18.3]{Agrachev-Sachkov-2004-control_theory_geometric_viewpoint} for the explicit computation.
\begin{proof}[Proof of Proposition~\ref{x:proposition:prop-step-2-PMP}.]\label{g:proof:idm685}
The curve \(a\colon[0,T]\to V_1^*\) will be given by restricting the linear map
\begin{equation}
a(t) := (L_{\gamma(t)})^*\lambda(t)\colon \mathfrak{g}\to\RR\label{x:men:eq-subdifferential-curve-defn}
\end{equation}
to \(V_1\). The skew-symmetric bilinear form \(B\colon V_1\times V_1\to\RR\) will be given by
\begin{equation}
B(X,Y) := a(t)[X,Y]\text{.}\label{x:men:eq-bilinear-form-defn}
\end{equation}

The curve \(a(t)\) of {(\ref{x:men:eq-subdifferential-curve-defn})} has in the left-invariant frame the same coefficients as the curve \(\lambda(t)\), i.e.\@, the coefficients of \(a(t) = \sum_{i=1}^na_i(t)\theta_i(e)\) are exactly \(a_i=\lambda_i\). Left-translating the Hamiltonian ODE {(\ref{x:men:eq-left-trivialized-hamiltonian-flow})} to the identity shows that for almost every \(t\in[0,T]\), the components of the curve have the derivatives
\begin{equation}
\dot{a}_i(t) = \frac{d}{dt}\lambda_i(t) = a(t)[u(t),X_i],\quad i=1,\ldots,n\text{.}\label{x:men:eq-coordinate-ode}
\end{equation}

By the step 2 assumption, \([u(t),X_i]=0\) for all the vertical components \(i=r+1,\dots,n\), so the vertical coefficients \(a_{r+1},\dots,a_n\) are all constant. Therefore \(a(t)[X,Y] = \sum_{i=r+1}^{n}a_i\theta_i([X,Y])\) is constant in \(t\). That is, the expression {(\ref{x:men:eq-bilinear-form-defn})} defines a unique bilinear form \(B\) independent from \(t\).

Writing the system {(\ref{x:men:eq-coordinate-ode})} in terms of the bilinear form \(B\), the remaining non-trivial equations are exactly
\begin{equation*}
\dot{a}_i(t) = a(t)[u(t),X_i]
= B(u(t),X_i),\quad i=1,\ldots,r\text{.}
\end{equation*}
The derivative condition {\ref{x:proposition:prop-step-2-PMP}}\ref{x:li:prop-step-2-PMP-ode} follows by linearity, as for an arbitrary vector \(Y=y_1X_1+\dots+y_rX_r\in V_1\), the above implies that
\begin{equation*}
\frac{d}{dt}a(t)Y = \frac{d}{dt}\sum_{i=1}^na_i(t)y_i
= \sum_{i=1}^nB(u(t),X_i)y_i
= B(u(t),Y)\text{.}
\end{equation*}

The subdifferential condition {\ref{x:proposition:prop-step-2-PMP}}\ref{x:li:prop-step-2-PMP-subdiff} for the linear functions \(a(t)\) follows from rephrasing the maximality condition {(\ref{x:mrow:PMP-subdifferential})}. Namely, expanding out the explicit expressions of the normal Hamiltonians \(h_{u(t),-1}\) and \(h_{v,-1}\) from {(\ref{x:men:eq-hamiltonian})} and reorganizing terms, the maximality condition {(\ref{x:mrow:PMP-subdifferential})} is equivalently stated as
\begin{equation*}
a(t)v - a(t)u(t) \leq \frac{1}{2}\norm{v}^2-\frac{1}{2}\norm{u(t)}^2
\quad\forall v\in V_1\quad\text{a.e. }t\in[0,T]\text{.}
\end{equation*}
This is exactly {Definition~\ref{x:definition:def-subdifferential}} stating that the linear function \(a(t)\) is a subdifferential of the squared norm \(\frac{1}{2}\norm{\cdot}^2\) at the point \(u(t)\in V_1\).
\end{proof}

\typeout{************************************************}
\typeout{Section 4 Asymptotic behavior of controls}
\typeout{************************************************}

\section{Asymptotic behavior of controls}\label{x:section:sec-asymptotics}
In this section, let \(u\colon [0,\infty)\to V_1\) be a fixed {control} satisfying the {PMP~\ref{x:proposition:prop-step-2-PMP}}. Let \(a\colon[0,\infty)\to V_1^*\) be the associated curve of {subdifferentials} and let \(B\colon V_1\times V_1\to\RR\) be the associated bilinear form.
\begin{lemma}\label{x:lemma:lemma-limit-average-in-kernel}
For every vector \(X\in V_1\),
\begin{equation*}
\lim\limits_{T\to\infty}B\left(\intavg_{0}^Tu(t)\,dt,X\right) = 0\text{.}
\end{equation*}

\end{lemma}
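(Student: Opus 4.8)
The plan is to recognize the subdifferential curve $a$ as an antiderivative of the integrand and to exploit that it stays uniformly bounded. First I would record that, since $u$ is the control of a geodesic, it has unit speed, so $\norm{u(t)}=1$ for almost every $t$. By the subdifferential condition \ref{x:li:prop-step-2-PMP-subdiff} of the PMP, each $a(t)$ is a subdifferential of $\frac{1}{2}\norm{\cdot}^2$ at the point $u(t)$, so {Lemma~\ref{x:lemma:lemma-subdifferential-of-norm}} yields the uniform bound $\abs{a(t)(X)}\leq \norm{X}\norm{u(t)} = \norm{X}$ for every $t$ and every $X\in V_1$.

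Next I would integrate the derivative condition \ref{x:li:prop-step-2-PMP-ode}. Since $a$ is absolutely continuous and $\frac{d}{dt}a(t)(X) = B(u(t),X)$ for almost every $t$, the fundamental theorem of calculus gives
\begin{equation*}
\int_0^T B(u(t),X)\,dt = a(T)(X)-a(0)(X)\text{.}
\end{equation*}
Because $B(\cdot,X)\colon V_1\to\RR$ is a bounded linear functional, it commutes with the vector-valued integral, so the left-hand side equals $B\bigl(\int_0^T u(t)\,dt,X\bigr)$. Dividing by $T$ and using the definition of the integral average then produces
\begin{equation*}
B\left(\intavg_0^T u(t)\,dt,X\right) = \frac{1}{T}\bigl(a(T)(X)-a(0)(X)\bigr)\text{.}
\end{equation*}

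Finally I would invoke the uniform bound from the first step: the right-hand side is bounded in absolute value by $\frac{2\norm{X}}{T}$, which tends to $0$ as $T\to\infty$, proving the claim. There is no serious obstacle here; the one point deserving care is the observation that an antiderivative of $t\mapsto B(u(t),X)$ is precisely the quantity $a(t)(X)$, which the subdifferential estimate forces to remain \emph{bounded}. It is exactly this boundedness — rather than mere integrability — that makes the average decay like $1/T$, and it is where the unit-speed geodesic hypothesis enters.
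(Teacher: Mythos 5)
Your proof is correct and follows essentially the same route as the paper's: bilinearity to pull the integral average inside \(B\), the fundamental theorem of calculus applied to the derivative condition \ref{x:li:prop-step-2-PMP-ode}, and the bound \(\abs{a(t)X}\leq\norm{X}\) from Lemma~\ref{x:lemma:lemma-subdifferential-of-norm} together with unit speed. The one nuance: the subdifferential condition \ref{x:li:prop-step-2-PMP-subdiff} holds only at \emph{almost} every \(t\), so your assertion of the bound ``for every \(t\)'' needs the paper's one-line remark that continuity of the curve \(a\) upgrades the almost-everywhere bound to all \(T\).
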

\begin{proof}\label{g:proof:idm749}
Fix an arbitrary vector \(X\in V_1\). Bilinearity of the map \(B\) implies that
\begin{equation}
B\left(\intavg_{0}^Tu(t)\,dt,X\right)
= \frac{1}{T}\int_{0}^TB\left(u(t),X\right)\,dt\text{.}\label{x:men:eq-B-integral-average}
\end{equation}
Since the curve \(a\) is absolutely continuous, the derivative condition {PMP~\ref{x:proposition:prop-step-2-PMP}}\ref{x:li:prop-step-2-PMP-ode} implies that
\begin{equation}
\int_{0}^{T}B(u(t),X)
= \int_{0}^{T}\frac{d}{dt}a(t)X
= a(T)X-a(0)X\text{.}\label{x:men:eq-B-integral-bound-in-T}
\end{equation}

By the subdifferential condition {PMP~\ref{x:proposition:prop-step-2-PMP}}\ref{x:li:prop-step-2-PMP-subdiff}, for almost every \(T\), the linear map \(a(T)\) is a {subdifferential} of the squared norm \(\frac{1}{2}\norm{\cdot}^2\) at the point \(u(T)\). Since \(\norm{u(T)}\equiv 1\) is constant, continuity of the curve \(a\) and {Lemma~\ref{x:lemma:lemma-subdifferential-of-norm}} imply the bound \(\abs{a(T)X} \leq \norm{X}\) for every \(T\in[0,\infty)\). The identities {(\ref{x:men:eq-B-integral-average})} and {(\ref{x:men:eq-B-integral-bound-in-T})} then imply the desired conclusion that
\begin{equation*}
\lim\limits_{T\to\infty}\abs{B\left(\intavg_{0}^Tu(t)\,dt,X\right)}
\leq \lim\limits_{T\to\infty}\frac{2}{T}\norm{X}
= 0\text{.}\qedhere
\end{equation*}

\end{proof}
\begin{lemma}\label{x:lemma:lemma-asymptotic-cone-control-in-kernel}
Let \(\dilationfactor_k\to \infty\) be a diverging sequence and let \(u_{\dilationfactor_k}(t)=u(\dilationfactor_kt)\) be the corresponding {dilated controls}. If \(u_{\dilationfactor_k}\to \tilde{u}\) in \(\Lloc([0,\infty);V_1)\), then \(\tilde{u}(t)\in\ker B\) for almost every \(t\in[0,\infty)\).
\end{lemma}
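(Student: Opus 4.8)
The plan is to push the conclusion of Lemma~\ref{x:lemma:lemma-limit-average-in-kernel} forward from the original control \(u\) to the blowdown control \(\tilde{u}\) by a change of variables, and then to pass from integral averages to pointwise values by the Lebesgue differentiation theorem. The only substantive observation is that the blowdown rescaling turns an average of \(u_{\dilationfactor_k}\) over a \emph{fixed} interval into an average of \(u\) over a \emph{diverging} interval, which is precisely the regime controlled by the previous lemma; everything else is soft.

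Concretely, I would first record how the rescaling interacts with integral averages. For any fixed \(S>0\), the substitution \(\tau=\dilationfactor_k t\) gives
\begin{equation*}
\intavg_0^S u_{\dilationfactor_k}(t)\,dt = \intavg_0^{\dilationfactor_k S} u(\tau)\,d\tau\text{.}
\end{equation*}
Since \(\dilationfactor_k\to\infty\) forces \(\dilationfactor_k S\to\infty\), Lemma~\ref{x:lemma:lemma-limit-average-in-kernel} applies along this subsequence of scales and yields, for every \(X\in V_1\),
\begin{equation*}
\lim_{k\to\infty} B\left(\intavg_0^S u_{\dilationfactor_k}(t)\,dt,\,X\right)
= \lim_{k\to\infty} B\left(\intavg_0^{\dilationfactor_k S} u(\tau)\,d\tau,\,X\right) = 0\text{.}
\end{equation*}

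Next I would invoke the hypothesis \(u_{\dilationfactor_k}\to\tilde{u}\) in \(\Lloc\): on each bounded interval \([0,S]\) this gives \(L^2\)-convergence, hence \(L^1\)-convergence by Cauchy--Schwarz, and therefore convergence of the integral averages \(\intavg_0^S u_{\dilationfactor_k}\to\intavg_0^S\tilde{u}\). As \(B\) is a bilinear form on a finite dimensional space it is continuous, so passing to the limit in the display above gives \(B(\intavg_0^S\tilde{u},X)=0\) for every \(S>0\) and every \(X\in V_1\); equivalently, \(\int_0^S\tilde{u}(t)\,dt\in\ker B\) for all \(S>0\). Because \(\ker B\) is a linear subspace, subtracting two such integrals shows \(\int_a^b\tilde{u}(t)\,dt\in\ker B\) for all \(0\leq a<b\), so every average \(\intavg_a^b\tilde{u}\) lies in \(\ker B\).

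Finally, at each Lebesgue point of \(\tilde{u}\), which is almost every \(t\), the averages \(\intavg_t^{t+h}\tilde{u}\) converge to \(\tilde{u}(t)\) as \(h\to 0^+\). Since these averages all lie in the closed subspace \(\ker B\), the limit \(\tilde{u}(t)\) lies in \(\ker B\) as well, which is the claim. I do not anticipate a genuine obstacle here: the one step carrying all the content is the change of variables combined with the application of Lemma~\ref{x:lemma:lemma-limit-average-in-kernel}, while the conversion of an ``almost every \(S\)'' statement about averages into an ``almost every \(t\)'' statement about values is a routine appeal to the Lebesgue differentiation theorem and the closedness of \(\ker B\).
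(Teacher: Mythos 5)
Your proposal is correct and takes essentially the same route as the paper's proof: both reduce the pointwise claim to integral averages via the Lebesgue differentiation theorem, convert a fixed interval into a diverging one by the change of variables \(\tau=\dilationfactor_k t\), and combine Lemma~\ref{x:lemma:lemma-limit-average-in-kernel} with the \(\Lloc\) convergence and the continuity of the bilinear form \(B\). The only difference is cosmetic: the paper treats a general interval \([a,b]\) directly by writing \(\intavg_{a\dilationfactor_k}^{b\dilationfactor_k}u(t)\,dt\) as a linear combination of averages based at \(0\) before passing to the limit, whereas you pass to the limit on intervals \([0,S]\) first and then obtain general intervals by subtracting integrals of \(\tilde{u}\), using that \(\ker B\) is a linear subspace.
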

\begin{proof}\label{g:proof:idm784}
By the Lebesgue differentiation theorem it suffices to prove that \(\intavg_{a}^b\tilde{u}(t)\,dt\in \ker B\) for any \(0\leq a\lt b\lt \infty\).

Fix \(0\leq a\lt b\lt \infty\). By assumption \(u_{\dilationfactor_k}\to \tilde{u}\) in \(\mathrm{L}^2([a,b];V_1)\), so there exists some error term \(\epsilon\colon \NN\to V_1\) with \(\lim\limits_{k\to\infty}\epsilon(k)=0\) such that
\begin{equation}
\intavg_{a}^b\tilde{u}(t)\,dt
= \intavg_a^bu(\dilationfactor_kt)\,dt +\epsilon(k)
= \intavg_{a\dilationfactor_k}^{b\dilationfactor_k}u(t)\,dt+\epsilon(k)\text{.}\label{x:men:eq-limit-intavg}
\end{equation}
The right-hand integral average can further be expressed as a difference of integral averages as
\begin{equation}
\intavg_{a\dilationfactor_k}^{b\dilationfactor_k}u(t)\,dt
= \frac{b}{b-a}\cdot\intavg_{0}^{b\dilationfactor_k}u(t)\,dt-\frac{a}{b-a}\cdot\intavg_{0}^{a\dilationfactor_k}u(t)\,dt\text{.}\label{x:men:eq-difference-of-intavg}
\end{equation}

{Lemma~\ref{x:lemma:lemma-limit-average-in-kernel}} implies that for any \(X\in V_1\)
\begin{equation*}
\lim\limits_{k\to\infty}B\left(\intavg_{0}^{b\dilationfactor_k}u(t)\,dt,X\right) = \lim\limits_{k\to\infty}B\left(\intavg_{0}^{a\dilationfactor_k}u(t)\,dt,X\right) = 0\text{.}
\end{equation*}
Combining the identities {(\ref{x:men:eq-limit-intavg})} and {(\ref{x:men:eq-difference-of-intavg})} and using bilinearity of \(B\) then implies that \(B\left(\intavg_a^b\tilde{u}(t)\,dt,X\right)=0\). Since the vector \(X\in V_1\) was arbitrary, this proves the desired claim that \(\intavg_{a}^b\tilde{u}(t)\,dt\in\ker B\).
\end{proof}

\typeout{************************************************}
\typeout{Section 5 Affinity of infinite geodesics}
\typeout{************************************************}

\section{Affinity of infinite geodesics}\label{x:section:sec-infinite-geodesics-proof-conclusion}

\typeout{************************************************}
\typeout{Subsection 5.1 Sub-Finsler Carnot groups}
\typeout{************************************************}

\subsection{Sub-Finsler Carnot groups}\label{g:subsection:idm808}
The proof of {Theorem~\ref{x:theorem:thm-infinite-geodesics-are-lines}} will now be concluded. The key ingredients are the {sub-Finsler PMP~\ref{x:proposition:prop-step-2-PMP}}, the knowledge of asymptotic behavior of blowdown controls from {Lemma~\ref{x:lemma:lemma-asymptotic-cone-control-in-kernel}}, and the convex analysis arguments from {Subsection~\ref{x:subsection:subsec-subdifferentials}}.
\begin{proof}[Proof of Theorem~\ref{x:theorem:thm-infinite-geodesics-are-lines}.]\label{x:proof:proof-thm-subfinsler-infinite-geodesics}
Let \(\gamma\colon[0,\infty)\to G\) be an infinite {geodesic} and let \(u\colon[0,\infty)\to V_1\) be its {control}. Let \(a\colon[0,\infty)\to V_1^*\) be the curve of {subdifferentials} of the squared norm \(\frac{1}{2}\norm{\cdot}^2\) and let \(B\colon V_1\times V_1\to\RR\) be the skew-symmetric bilinear form given by the {PMP~\ref{x:proposition:prop-step-2-PMP}}.

By {Lemma~\ref{x:lemma:lemma-existence-of-blowdown-line}}, there exists a sequence \(\dilationfactor_k\to\infty\) such that the {blowdown} \(\tilde{\gamma}=\lim\limits_{k\to\infty}\delta_{1/\dilationfactor_k}\circ\gamma\circ\delta_{\dilationfactor_k}\colon [0,\infty)\to G\) is affine, i.e.\@, a left translation of a one parameter subgroup. By {Lemma~\ref{x:lemma:lemma-blowdown-properties}}, taking a subsequence if necessary, the {dilated controls} \(u_{\dilationfactor_k}(t) = u(\dilationfactor_kt)\) converge in \(\Lloc([0,\infty);V_1)\) to the control \(\tilde{u}\) of the curve \(\tilde{\gamma}\). Since the curve \(\tilde{\gamma}\) is affine, the control \(\tilde{u}\) is constant. That is, there exists a constant vector \(Y\in V_1\), which for almost every \(t\in[0,\infty)\) is the limit
\begin{equation}
Y = \tilde{u}(t) = \lim\limits_{k\to\infty}u(\dilationfactor_k t)\text{.}\label{x:men:eq-Y-as-limit}
\end{equation}
By {Lemma~\ref{x:lemma:lemma-asymptotic-cone-control-in-kernel}}, \(Y\in\ker B\), so the derivative condition {PMP~\ref{x:proposition:prop-step-2-PMP}}\ref{x:li:prop-step-2-PMP-ode} implies that the curve \(t\mapsto a(t)Y\) is constant \(a(t)Y\equiv: C\).

Fix any \(t\in[0,\infty)\) such that the limit {(\ref{x:men:eq-Y-as-limit})} holds. By {Lemma~\ref{x:lemma:lemma-continuity-of-subdifferentials}}, up to taking a further subsequence, the subdifferentials \(a(\dilationfactor_k t)\) of the squared norm \(\frac{1}{2}\norm{\cdot}^2\) at the points \(u(\dilationfactor_k t)\) converge to a subdifferential \(\tilde{a}\colon V_1\to\RR\) of the squared norm \(\frac{1}{2}\norm{\cdot}^2\) at the point \(Y\). Moreover, since \(a(t)Y\equiv C\) is constant, also the limit evaluates to \(\tilde{a}Y = C\). Applying {Lemma~\ref{x:lemma:lemma-subdifferential-of-norm}} for the subdifferential \(\tilde{a}\) shows that \(C = \tilde{a}Y = \norm{Y}^2\). Similarly applying {Lemma~\ref{x:lemma:lemma-subdifferential-of-norm}} for the subdifferential \(a(t)\) shows that \(a(t)u(t) = \norm{u(t)}^2\). Since the curves \(\gamma\) and \(\tilde{\gamma}\) are both geodesics, \(\norm{u(t)} = 1 = \norm{Y}\), so combining all of the above gives the equality
\begin{equation*}
a(t)Y = 1 = a(t)u(t)\text{.}
\end{equation*}
Consequently for any convex combination \(X\in V_1\) of \(u(t)\) and \(Y\), {Lemma~\ref{x:lemma:lemma-subdifferential-of-norm}} implies that
\begin{equation*}
1 = a(t)X \leq \norm{X}\norm{u(t)} = \norm{X}\text{.}
\end{equation*}
By strict convexity of the norm this is only possible when \(u(t)=Y\).

Repeating the same argument at all the times \(t\) satisfying the limit {(\ref{x:men:eq-Y-as-limit})}, it follows that \(u(t)=Y\) for almost every \(t\in[0,\infty)\), so the geodesic \(\gamma\) is affine.
\end{proof}

\typeout{************************************************}
\typeout{Subsection 5.2 Arbitrary homogeneous distances}
\typeout{************************************************}

\subsection{Arbitrary homogeneous distances}\label{g:subsection:idm885}
The proof of {Corollary~\ref{x:corollary:cor-homogeneous-group-geodesics-are-lines}} about infinite {geodesics} for arbitrary {homogeneous distances} follows from the {sub-Finsler case} by passing to the induced {length metric}. The relevant properties are captured in the next lemma.
\begin{lemma}\label{x:lemma:lemma-length-metric-of-homogeneous-distance-is-subfinsler}
Let \((G,d)\) be a {stratified group} equipped with a {homogeneous distance} \(d\) and let \(d_\ell\) be the {length metric} of \(d\). Then
\begin{enumerate}[label=(\roman*)]
\item\label{x:li:lemma-length-subfinsler}\((G,d_\ell)\) is a {sub-Finsler Carnot group}.
\item\label{x:li:lemma-length-geodesics}All {geodesics} of \((G,d)\) are also geodesics of \((G,d_\ell)\).
\item\label{x:li:lemma-length-subfinsler-norm}The {projection norm} of \(d\) is the {sub-Finsler norm} of \(d_\ell\).
\end{enumerate}

\end{lemma}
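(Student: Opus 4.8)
The plan is to identify the length metric \(d_\ell\) explicitly as the sub-Finsler distance \(d_{SF}\) determined by the norm \(\norm{\cdot}_d\), the {projection norm} of \(d\). Once this identification is established, \ref{x:li:lemma-length-subfinsler} and \ref{x:li:lemma-length-subfinsler-norm} are immediate, and \ref{x:li:lemma-length-geodesics} is a short separate argument. The first step is to observe that the two infima defining \(d_\ell\) and \(d_{SF}\) range over the same family of curves: since any two {homogeneous distances} on \(G\) are bi-Lipschitz equivalent, a curve is rectifiable for \(d\) precisely when it is rectifiable for the Carnot--Carath\'{e}odory distance, i.e.\ (up to reparametrisation) precisely when it is a horizontal curve with an \(\mathrm{L}^1\) control. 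Everything then reduces to comparing the two length functionals on a fixed horizontal curve \(\gamma\) with {control} \(u\).

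The heart of the argument is the claim that the \(d\)-metric speed of \(\gamma\) equals the projection norm of its control, \(\abs{\dot\gamma}_d(t)=\norm{u(t)}_d\) for almost every \(t\). To prove this I would fix a Lebesgue point \(t\) of \(u\) and use left-invariance and homogeneity of \(d\) to write
\[ \frac{d(\gamma(t),\gamma(t+s))}{s} = d\!\left(e,\delta_{1/s}\big(\gamma(t)^{-1}\gamma(t+s)\big)\right)\text{.} \]
By {Lemma~\ref{x:lemma:lemma-rescaled-control}}, the point \(\delta_{1/s}(\gamma(t)^{-1}\gamma(t+s))\) is the time-\(1\) endpoint of the horizontal curve driven by the rescaled control \(\tau\mapsto u(t+s\tau)\) on \([0,1]\). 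At a Lebesgue point this rescaled control converges in \(\mathrm{L}^1([0,1];V_1)\) to the constant control \(u(t)\) as \(s\to 0\); by continuity of the horizontal endpoint map with respect to the control, the endpoint converges to \(\exp(u(t))\). Continuity of \(d\) and {Definition~\ref{x:definition:def-projection-norm}} then give \(\lim_{s\to 0}d(e,\delta_{1/s}(\gamma(t)^{-1}\gamma(t+s)))=d(e,\exp(u(t)))=\norm{u(t)}_d\), with the analogous computation for \(s<0\) yielding the two-sided limit. Integrating over \([0,T]\), the standard length-structure formula expressing length as the integral of the metric speed (see \cite{Burago-Burago-Ivanov-2001-metric_geometry}) gives \(\ell_d(\gamma)=\int_0^T\norm{u(t)}_d\,dt=\ell_{\norm{\cdot}_d}(\gamma)\).

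It follows that \(d_\ell=d_{SF}\) for the norm \(\norm{\cdot}_d\), which is \ref{x:li:lemma-length-subfinsler}, and since the norm defining this structure is exactly the projection norm of \(d\), also \ref{x:li:lemma-length-subfinsler-norm}. (As a consistency check, the inequality \(d_\ell\ge d_{SF}\) follows independently from {Lemma~\ref{x:lemma:lemma-projection-is-submetry}}: the submetry \(\pi\) is \(1\)-Lipschitz and \(\pi\circ\gamma\) is, up to an additive constant, the curve \(t\mapsto\int_0^t u\) in \((V_1,\norm{\cdot}_d)\), so \(\ell_d(\gamma)\ge\ell_{\norm{\cdot}_d}(\pi\circ\gamma)=\int_0^T\norm{u(t)}_d\,dt\).) For \ref{x:li:lemma-length-geodesics}, let \(\gamma\colon[0,T]\to(G,d)\) be a {geodesic}. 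The length metric always dominates the original distance, so \(d_\ell(\gamma(s),\gamma(t))\ge d(\gamma(s),\gamma(t))=\abs{t-s}\); conversely, as \(\gamma\) is an isometric embedding of an interval, \(\ell_d(\restr{\gamma}{[s,t]})=\abs{t-s}\), whence \(d_\ell(\gamma(s),\gamma(t))\le\abs{t-s}\). Thus \(\gamma\) is a geodesic of \((G,d_\ell)\).

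The main obstacle is the metric-speed identity: whereas the lower bound is immediate from the submetry property, the matching upper bound rests on the asymptotic behavior of \(\delta_{1/s}(\gamma(t)^{-1}\gamma(t+s))\), and in particular on the continuity of the horizontal endpoint map under \(\mathrm{L}^1\)-convergence of controls evaluated at Lebesgue points of a merely \(\mathrm{L}^1\) control. The remaining inputs — bi-Lipschitz equivalence of homogeneous distances, the metric-length formula, and the elementary argument for \ref{x:li:lemma-length-geodesics} — are standard.
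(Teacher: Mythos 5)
Your proof is correct, but it follows a genuinely different route from the paper's. You identify \(d_\ell\) explicitly as the sub-Finsler distance \(d_{SF}\) of the projection norm by computing the metric speed \(\abs{\dot\gamma}_d(t)=\norm{u(t)}_d\) at Lebesgue points of the control, which yields \ref{x:li:lemma-length-subfinsler} and \ref{x:li:lemma-length-subfinsler-norm} in one stroke. The paper never computes a metric derivative: it proves \ref{x:li:lemma-length-subfinsler} abstractly by invoking the metric characterization of sub-Finsler Carnot groups in \cite{Le_Donne-2015-metric_characterization}, checking that \((G,d_\ell)\) is a locally compact, isometrically homogeneous geodesic space admitting dilations (finiteness of \(d_\ell\) via the stratification and concatenation of horizontal line segments; the geodesic property via \cite{Le_Donne-Rigot-2019-besicovitch-covering_on_graded_groups} and bounded compactness), and only then deduces \ref{x:li:lemma-length-subfinsler-norm} separately, noting that the horizontal projection is a submetry both for the sub-Finsler norm (by definition) and for \(\norm{\cdot}_d\) (Lemma~\ref{x:lemma:lemma-projection-is-submetry}), so the two norms have the same balls. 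For \ref{x:li:lemma-length-geodesics} the paper cites \cite[Proposition~2.3.12]{Burago-Burago-Ivanov-2001-metric_geometry} (equality of \(d\)- and \(d_\ell\)-lengths), while your telescoping argument for \(\ell_d(\restr{\gamma}{[s,t]})=\abs{t-s}\) is equivalent and self-contained. What each approach buys: the paper's argument avoids all analysis of controls at the cost of a deep external theorem, and it does not produce the identity \(d_\ell=d_{SF}\) directly; your argument is more constructive and gives that stronger explicit identification, but it silently rests on three nontrivial standard inputs that you should cite or prove: (a) the bi-Lipschitz equivalence of all homogeneous distances, needed to equate rectifiable curves with horizontal curves --- this itself rests on homogeneous distances inducing the manifold topology, i.e.\@ the same result of \cite{Le_Donne-Rigot-2019-besicovitch-covering_on_graded_groups} the paper uses; (b) continuity of the horizontal endpoint map under \(\mathrm{L}^1\)-convergence of controls, which in the nilpotent setting follows from the finite iterated-integral (Chen series) expansion of the endpoint, each term being continuous on \(\mathrm{L}^1\)-bounded sets; and (c) the representation of length as the integral of the metric speed for absolutely continuous curves. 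With those references supplied, your argument is complete and slightly sharper than the paper's.
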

\begin{proof}\label{g:proof:idm916}
\textbf{(i).}
In \cite[Theorem~1.1]{Le_Donne-2015-metric_characterization} sub-Finsler Carnot groups are characterized as the only {geodesic} metric spaces that are locally compact, isometrically homogeneous, and admit a dilation. Therefore it suffices to verify that the {length metric} associated with a {homogeneous distance} satisfies these properties.

The claims of isometric homogeneity and admitting a dilation follow directly from the corresponding properties of the {metric} \(d\). Namely, since {left-translations} are isometries of the metric \(d\), they preserve the length of curves, and hence are also isometries of the {length metric} \(d_\ell\). Similarly since {dilations} scale the length of curves linearly, they are dilations for the length metric \(d_\ell\).

Finiteness of the {length metric} \(d_\ell\) follows from the {stratification} assumption: each element \(g\in G\) can be written as a product of elements in \(\exp(V_1)\) and the {horizontal} lines \(t\mapsto \exp(tX)\) are all {geodesics}. Therefore concatenation of suitable horizontal line segments defines a finite length curve from the identity \(e\) to any desired point \(g\). It follows that the length metric \(d_\ell\) determines a well defined {homogeneous distance} on \(G\), so by  \cite[Proposition~2.26]{Le_Donne-Rigot-2019-besicovitch-covering_on_graded_groups} it induces the manifold topology of \(G\). In particular \((G,d_\ell)\) is a boundedly compact length space, so it is a geodesic metric space (see  \cite[Corollary~2.5.20]{Burago-Burago-Ivanov-2001-metric_geometry}). Applying  \cite[Theorem~1.1]{Le_Donne-2015-metric_characterization} shows that \((G,d_\ell)\) is a {sub-Finsler Carnot group}.

\textbf{(ii).}
The lengths of all rectifiable curves in the original metric \(d\) and its associated {length metric} \(d_\ell\) always agree (see  \cite[Proposition~2.3.12]{Burago-Burago-Ivanov-2001-metric_geometry}). In particular, the claim that the {geodesics} of \((G,d)\) are geodesics of \((G,d_\ell)\) follows.

\textbf{(iii).}
The horizontal projection \(\pi\colon (G,d)\to V_1\) is a submetry both for the {sub-Finsler} norm \(\norm{\cdot}_{SF}\) (by definition) and for the {projection norm} \(\norm{\cdot}_d\) (by {Lemma~\ref{x:lemma:lemma-projection-is-submetry}}). Hence the norms \(\norm{\cdot}_{SF}\) and \(\norm{\cdot}_d\) have exactly the same balls, so \(\norm{\cdot}_{SF} = \norm{\cdot}_d\).

\end{proof}
\begin{proof}[Proof of Corollary~\ref{x:corollary:cor-homogeneous-group-geodesics-are-lines}.]\label{x:proof:proof-cor-homogeneous-group-geodesics}
Let \((G,d)\) be a {stratified group} of step 2 equipped with a {homogeneous distance} \(d\) whose {projection norm} is strictly convex, and let \(\gamma\colon[0,\infty)\to (G,d)\) be an infinite {geodesic}.

Let \(d_\ell\) be the {length-metric} associated with \(d\). By {Lemma~\ref{x:lemma:lemma-length-metric-of-homogeneous-distance-is-subfinsler}}\ref{x:li:lemma-length-subfinsler} and \ref{x:li:lemma-length-geodesics}, the curve \(\gamma\) is also a geodesic of \((G,\norm{\cdot})\), where \(\norm{\cdot}\colon V_1\to\RR\) is the sub-Finsler norm of the sub-Finsler metric \(d_\ell\). Moreover by {Lemma~\ref{x:lemma:lemma-length-metric-of-homogeneous-distance-is-subfinsler}}\ref{x:li:lemma-length-subfinsler-norm} the norm \(\norm{\cdot}=\norm{\cdot}_d\) is by assumption strictly convex.

Consequently by {Theorem~\ref{x:theorem:thm-infinite-geodesics-are-lines}}, the geodesic \(\gamma\) is affine.
\end{proof}
The necessity of the strict convexity assumption is an immediate consequence of the classical case of normed spaces by the following simple lifting argument.
\begin{proposition}\label{x:proposition:prop-existence-of-non-line-infinite-geodesic}
Let \(G\) be a {stratified group} equipped with an arbitrary {homogeneous distance} \(d\). If the {projection norm} of \(d\) is not strictly convex, then there exist an infinite {geodesic} \(\gamma\colon\RR\to G\) which is not affine.
\end{proposition}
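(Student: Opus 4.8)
The plan is to produce the desired non-affine infinite geodesic by an elementary lifting argument, exploiting that horizontal lines are geodesics for $d$ and that the horizontal projection $\pi$ is $1$-Lipschitz. The starting point is to extract the relevant data from the failure of strict convexity: since $\norm{\cdot}_d$ is not strictly convex, its unit sphere contains a non-degenerate segment, so there are distinct vectors $X,X'\in V_1$ with $\norm{X}_d=\norm{X'}_d=1$ such that $\norm{\theta X+(1-\theta)X'}_d=1$ for every $\theta\in[0,1]$. By one-homogeneity of the norm this is equivalent to the identity $\norm{aX+bX'}_d=a+b$ for all $a,b\geq 0$, which is the only special property of this configuration that the proof will use.

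Next I would define $\gamma\colon\RR\to G$ as the horizontal lift, anchored at the identity, of the broken line in $V_1$ that travels in direction $X$ for negative times and in direction $X'$ for positive times, namely
\begin{equation*}
\gamma(t)=\exp(tX)\text{ for }t\leq 0,\qquad \gamma(t)=\exp(tX')\text{ for }t\geq 0\text{.}
\end{equation*}
This is a horizontal curve whose control equals $X$ on $(-\infty,0)$ and $X'$ on $(0,\infty)$; since $X\neq X'$ the control is non-constant, so $\gamma$ is not a left translate of a one-parameter subgroup, i.e.\@ not affine.

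It then remains to verify that $\gamma$ is an isometric embedding, i.e.\@ that $d(\gamma(s),\gamma(t))=\abs{t-s}$ for all $s,t\in\RR$. When $s$ and $t$ lie on the same side of $0$, the points $\gamma(s),\gamma(t)$ lie on a single one-parameter subgroup ($\exp(\RR X)$ or $\exp(\RR X')$), so left-invariance of $d$ together with {Definition~\ref{x:definition:def-projection-norm}} of the projection norm gives $d(\gamma(s),\gamma(t))=\norm{(t-s)X}_d=\abs{t-s}$, and likewise with $X'$. When $s\leq 0\leq t$, the upper bound follows from the triangle inequality through $\gamma(0)=e$, as $d(\gamma(s),\gamma(t))\leq d(\gamma(s),e)+d(e,\gamma(t))=(-s)+t=\abs{t-s}$, while the matching lower bound comes from {Lemma~\ref{x:lemma:lemma-projection-is-submetry}}: the submetry $\pi$ is $1$-Lipschitz, so $d(\gamma(s),\gamma(t))\geq\norm{\pi\gamma(t)-\pi\gamma(s)}_d=\norm{tX'+(-s)X}_d=t+(-s)=\abs{t-s}$.

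I do not expect a genuine obstacle, as this is precisely the simple lifting promised in the text. The only point needing care is the lower bound across the corner at $t=0$, where one cannot argue directly in $G$ but must descend to $V_1$ via the submetry property; and it is exactly here that the failure of strict convexity is used, through the computation $\norm{tX'+(-s)X}_d=t+(-s)$ for the broken horizontal displacement. Note that nothing in this argument relies on $G$ having step $2$, consistent with the generality of the statement.
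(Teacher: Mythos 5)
Your proof is correct, and it shares the paper's overall philosophy \textemdash{} both arguments produce the geodesic by lifting from the normed space \((V_1,\norm{\cdot}_d)\) and both hinge on Lemma~\ref{x:lemma:lemma-projection-is-submetry} \textemdash{} but the execution is genuinely different. The paper takes a smooth non-affine geodesic in \(V_1\), namely \(\beta(t)=tX+\epsilon\sin(t)Y\) where \(\norm{X+cY}_d\) is constant for \(-\epsilon\leq c\leq\epsilon\), and then invokes the full submetry property of \(\pi\) to lift \(\beta\) to an infinite geodesic of \((G,d)\), concluding non-affineness because \(\pi\) is a homomorphism and \(\pi\circ\gamma=\beta\) is not affine. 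You instead take the simplest witness, a broken line with a single corner placed at the identity, write down its horizontal lift explicitly, and verify the isometric-embedding property by hand: the triangle inequality through \(\gamma(0)=e\) gives the upper bound across the corner, while only the inequality half of Lemma~\ref{x:lemma:lemma-projection-is-submetry} (the \(1\)-Lipschitz bound \(\norm{\pi(g)^{-1}{+}\pi(h)}_d\leq d(g,h)\), applied after left-translating so that the horizontal component of \(\log(\gamma(s)^{-1}\gamma(t))\) is \(tX'-sX\) by Baker\textendash{}Campbell\textendash{}Hausdorff) gives the matching lower bound via the flat-segment identity \(\norm{aX+bX'}_d=a+b\) for \(a,b\geq 0\). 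What your route buys is self-containedness: the paper's one-line assertion that ``the geodesic \(\beta\) lifts to an infinite geodesic \(\gamma\)'' tacitly uses that geodesics lift through submetries, a true but unproved-in-the-paper fact, whereas your explicit lift and direct verification replace it entirely. What the paper's route buys is brevity and flexibility: any non-affine geodesic of the non-strictly-convex norm lifts at once, with no case analysis, while your verification exploits that the curve passes through \(e\) at its unique corner and would need a short extension (chained triangle inequalities together with the same additivity identity) to handle curves with several corners. Your closing observation that nothing uses step \(2\) matches the paper, whose proposition is likewise stated for arbitrary stratified groups.
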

\begin{proof}\label{g:proof:idm1019}
If the {projection norm} \(\norm{\cdot}_d\colon V_1\to\RR\) is not strictly convex, then there exists a non-linear {geodesic} \(\beta\colon\RR \to V_1\). For example, if the norm \(\norm{X+cY}_d\) is constant for \(-\epsilon\leq c\leq \epsilon\), then the curve \(\beta(t) = tX+\epsilon\sin(t)Y\) is an infinite geodesic.

By {Lemma~\ref{x:lemma:lemma-projection-is-submetry}}, the projection \(\pi\colon (G,d)\to (V_1,\norm{\cdot})\) is a submetry, so the geodesic \(\beta\colon \RR\to V_1\) lifts to an infinite geodesic \(\gamma\colon \RR\to G\). Since the projection is a homomorphism and the geodesic \(\beta\) is not affine, neither is the geodesic \(\gamma\).
\end{proof}

\typeout{************************************************}
\typeout{Section 6 Affinity of isometric embeddings}
\typeout{************************************************}

\section{Affinity of isometric embeddings}\label{x:section:sec-isometric-embeddings}
{Theorem~\ref{x:theorem:thm-isometric-embeddings}} about isometric embeddings being affine follows from {Corollary~\ref{x:corollary:cor-homogeneous-group-geodesics-are-lines}} by an abstraction of the argument of \cite[Theorem~4.1]{Balogh-Fassler-Sobrino-2018-embeddings_into_heisenberg}. The abstract version of their statement is {Proposition~\ref{x:proposition:prop-affine-geodesics-implies-affine-embeddings}}. The key link between the metric and algebraic properties is the following simple lemma stating that the distance between two lines grows sublinearly if and only if the lines are parallel.
\begin{lemma}\label{x:lemma:lemma-sublinear-geodesics}
Let \((G,d)\) be a {stratified group} with a {homogeneous distance}. Then for all points \(g,h\in G\) and all vectors \(X,Y\in V_1\)
\begin{equation*}
d(g\exp(tX),h\exp(tY)) = o(t)\text{ as }t\to \infty \iff X=Y\text{.}
\end{equation*}

\end{lemma}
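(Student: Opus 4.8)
The plan is to collapse both implications into a single limit computation obtained by rescaling with the dilations. The key structural fact is that the dilations are group automorphisms acting by scaling on the first layer, so that $\delta_{1/t}\exp(tX)=\exp(X)$ for $X\in V_1$ and hence, for every $t>0$,
\begin{equation*}
\delta_{1/t}\big(g\exp(tX)\big)=\delta_{1/t}(g)\exp(X),\qquad \delta_{1/t}\big(h\exp(tY)\big)=\delta_{1/t}(h)\exp(Y).
\end{equation*}
Combining this with the one-homogeneity of the distance, applied with dilation factor $\lambda=1/t$, turns the growth rate into a single distance:
\begin{equation*}
\frac{d\big(g\exp(tX),\,h\exp(tY)\big)}{t}=d\big(\delta_{1/t}(g)\exp(X),\,\delta_{1/t}(h)\exp(Y)\big).
\end{equation*}

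Next I would let $t\to\infty$. Since $\delta_{1/t}(g)\to e$ and $\delta_{1/t}(h)\to e$ (the dilations contract every point to the identity, as is visible on the Lie algebra level where $\delta_{1/t}$ scales the $k$-th layer by $t^{-k}$), continuity of group multiplication together with continuity of $d$ gives
\begin{equation*}
\lim_{t\to\infty}\frac{d\big(g\exp(tX),\,h\exp(tY)\big)}{t}=d\big(\exp(X),\exp(Y)\big).
\end{equation*}
The condition $d(g\exp(tX),h\exp(tY))=o(t)$ is by definition the statement that the left-hand ratio tends to $0$, so it holds precisely when the limiting value $d(\exp(X),\exp(Y))$ vanishes.

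Finally I would identify when $d(\exp(X),\exp(Y))=0$. As $d$ is a genuine distance this is equivalent to $\exp(X)=\exp(Y)$, and since the exponential map restricts to a bijection on $V_1$ (the stratified group being simply connected nilpotent, so that $\exp$ is a global diffeomorphism), this occurs exactly when $X=Y$. Chaining the equivalences yields the lemma, handling both directions at once.

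The computation is short and I do not anticipate a serious obstacle; the only points requiring care are the interchange of limit and distance, which is legitimate because a homogeneous distance induces the manifold topology and hence is continuous, and the correct bookkeeping of the homogeneity identity with $\lambda=1/t$. It is worth noting that left-invariance of $d$ plays no role here: only homogeneity and the automorphism property of the dilations are used.
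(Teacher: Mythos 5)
Your proposal is correct and is essentially the paper's own proof: both rescale by the dilation $\delta_{1/t}$, use one-homogeneity together with the automorphism property to rewrite $d(g\exp(tX),h\exp(tY))/t$ as $d(\delta_{1/t}(g)\exp(X),\delta_{1/t}(h)\exp(Y))$, and let $t\to\infty$ by continuity of $d$ to obtain the limit $d(\exp(X),\exp(Y))$, which vanishes exactly when $X=Y$. One small caveat on your closing aside: the continuity of $d$ with respect to the manifold topology, which you invoke to pass to the limit, is itself established in the literature using left-invariance, so left-invariance is not quite as dispensable as you suggest, though this does not affect the validity of the argument under the lemma's hypotheses.
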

\begin{proof}\label{g:proof:idm1051}
Consider {dilations} by \(1/t\). Since dilations are homomorphisms, continuity of the distance gives the limit
\begin{align*}
\lim\limits_{t\to\infty}\frac{d(g\exp(tX),h\exp(tY))}{t}
&= \lim\limits_{t\to\infty}d(\delta_{1/t}(g)\exp(X),\delta_{1/t}(h)\exp(Y))\\
&=d(\exp(X),\exp(Y))\text{.}\qedhere
\end{align*}

\end{proof}
\begin{proposition}\label{x:proposition:prop-affine-geodesics-implies-affine-embeddings}
Let \((H,d_H)\) and \((G,d_G)\) be {stratified groups} with {homogeneous distances} such that all infinite geodesics in \(G\) are affine. Then every isometric embedding \((H,d_H)\hookrightarrow (G,d_G)\) is affine.
\end{proposition}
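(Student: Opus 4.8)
The plan is to use the affinity hypothesis to turn the horizontal lines of $H$ into a rigid family of lines in $G$, to extract from this a basepoint-independent map on horizontal directions, and then to show that the resulting functional equation forces the embedding to be a homomorphism up to a left translation. Write $V_1^H$ and $V_1^G$ for the first layers of the Lie algebras of $H$ and $G$. Since left translations are isometries and affinity is preserved under composing with them, I would first replace an isometric embedding $f\colon(H,d_H)\hookrightarrow(G,d_G)$ by $L_{f(e_H)^{-1}}\circ f$ and assume $f(e_H)=e_G$; it then suffices to prove that $f$ is a group homomorphism. For $X\in V_1^H$ the horizontal line $t\mapsto h\exp(tX)$ is, by left-invariance and one-homogeneity of $d_H$, a {geodesic} of speed $\norm{X}_{d_H}$. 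Composing with the isometric embedding $f$ yields a geodesic of the same speed in $G$, and after rescaling the parameter this is a unit-speed infinite geodesic, hence affine by hypothesis. Thus there is a vector $\phi(h,X)\in V_1^G$ with $f(h\exp(tX))=f(h)\exp(t\,\phi(h,X))$ for all $t$, and matching speeds gives $\norm{\phi(h,X)}_{d_G}=\norm{X}_{d_H}$.

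The heart of the argument is to show that $\phi(h,X)$ does not depend on the basepoint $h$. Fixing $X$ and two basepoints $h,h'$, the lines $t\mapsto h\exp(tX)$ and $t\mapsto h'\exp(tX)$ are parallel, so {Lemma~\ref{x:lemma:lemma-sublinear-geodesics}} applied in $H$ gives $d_H(h\exp(tX),h'\exp(tX))=o(t)$. Because $f$ preserves distances, the images satisfy $d_G(f(h)\exp(t\phi(h,X)),f(h')\exp(t\phi(h',X)))=o(t)$ as well, and now the reverse implication of {Lemma~\ref{x:lemma:lemma-sublinear-geodesics}} applied in $G$ forces $\phi(h,X)=\phi(h',X)$. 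Hence $\phi(h,X)=:\phi(X)$ depends only on $X$, and I obtain the functional equation
\[
f(h\exp(tX))=f(h)\exp(t\,\phi(X))\qquad\text{for all }h\in H,\ X\in V_1^H,\ t\in\RR .
\]

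To conclude, I would use the stratification assumption: $V_1^H$ generates $\mathfrak{h}$, so every element of $H$ is a finite product $\exp(t_1X_1)\cdots\exp(t_mX_m)$ of exponentials of horizontal vectors. Iterating the functional equation along such a product (induction on $m$, applying the equation at the successive basepoints $h\exp(t_1X_1)\cdots\exp(t_{k}X_{k})$) yields
\[
f(h\exp(t_1X_1)\cdots\exp(t_mX_m))=f(h)\exp(t_1\phi(X_1))\cdots\exp(t_m\phi(X_m)).
\]
Taking $h=e_H$ and using $f(e_H)=e_G$ identifies the right-hand factor as $f(\exp(t_1X_1)\cdots\exp(t_mX_m))$; feeding this back in with a general $h$ shows $f(h_1h_2)=f(h_1)f(h_2)$ for all $h_1,h_2\in H$. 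Thus $f$ is a group homomorphism, and undoing the initial left translation shows that the original embedding is affine.

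I expect the basepoint-independence step to be the crux, since it is the single point where the metric hypothesis interacts with the group structure, through the two-way sublinear-distance characterization of parallel lines in {Lemma~\ref{x:lemma:lemma-sublinear-geodesics}}. The remaining ingredients are essentially formal once that is in place: the only care needed is the parameter rescaling required to invoke the unit-speed affinity hypothesis for lines of arbitrary speed, and the standard fact that horizontal exponentials generate a stratified group, which legitimizes the telescoping induction.
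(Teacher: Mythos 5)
Your proposal is correct and follows essentially the same route as the paper: normalize so the embedding fixes the identity, use the affinity hypothesis to write the image of each horizontal line as $f(h)\exp(t\,\phi(h,X))$, kill the basepoint dependence via the two-way sublinear-distance characterization of parallel lines ({Lemma~\ref{x:lemma:lemma-sublinear-geodesics}}), and conclude by iterating the resulting functional equation over products of horizontal exponentials, which generate $H$ by stratification. The only (immaterial) differences are that you compare two arbitrary basepoints $h,h'$ where the paper compares $h$ with the identity, and that you make the speed-rescaling needed to invoke the unit-speed hypothesis explicit.
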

\begin{proof}\label{g:proof:idm1067}
Let \(\varphi\colon (H,d_H)\hookrightarrow (G,d_G)\) be an isometric embedding. Since left-translations are isometries, it suffices to consider the case when the map \(\varphi\) preserves the identity element, and prove that such an isometric embedding is a homomorphism.

Consider an arbitrary point \(h\in H\) and a horizontal vector \(X\in V_1^H\). The horizontal line \(t\mapsto h\exp(tX)\) is an infinite geodesic with speed \(\norm{X}_H\) through the point \(h\in H\). The image of the line under the isometric embedding \(\varphi\) is an infinite geodesic in the group \(G\) through the point \(\varphi(h)\) with exactly the same speed. By assumption all infinite geodesics in the group \(G\) are horizontal lines, so there exists some vector \(Y\in V_1^G\) (a priori depending on the point \(h\) and the vector \(X\)) with \(\norm{X}_H=\norm{Y}_G\) such that
\begin{equation*}
\varphi(h\exp(tX)) = \varphi(h)\exp(tY)\quad\forall t\in\RR\text{.}
\end{equation*}

Consider then the two parallel infinite geodesics \(t\mapsto \exp(tX)\) and \(t\mapsto h\exp(tX)\) with speed \(\norm{X}_H\). Repeating the previous consideration, since the map \(\varphi\) was assumed to preserve the identity, there exists another horizontal direction \(Z\in V_1^G\) such that \(\varphi(\exp(tX))= \exp(tZ)\). By {Lemma~\ref{x:lemma:lemma-sublinear-geodesics}}, the distance between the two lines in the group \(H\) grows sublinearly. Since the map \(\varphi\) is an isometric embedding, also the distance between the image lines in the group \(G\) grows sublinearly. Hence applying {Lemma~\ref{x:lemma:lemma-sublinear-geodesics}} in the converse direction implies that \(Y=Z\). That is, the vector \(Y\in V_1^G\) does not depend on the point \(h\in H\), only on the vector \(X\in V_1^H\).

The above shows that there is a well defined map \(\varphi_*\colon V_1^H\to V_1^G\) such that \(\varphi(h\exp(X)) = \varphi(h)\exp(\varphi_* X)\). In particular,
\begin{equation}
\varphi(h_1h_2) = \varphi(h_1)\varphi(h_2)\quad \forall h_1\in H\;\forall h_2\in\exp(V_1^H)\text{.}\label{x:men:eq-partial-homomorphism}
\end{equation}
Since the group \(H\) is {stratified}, the subset \(\exp(V_1^H)\) generates the entire group \(H\). That is, any element \(h\in H\) can be written as a finite product of elements in \(\exp(V_1^H)\). Applying the identity {(\ref{x:men:eq-partial-homomorphism})} repeatedly using such decompositions shows that the map \(\varphi\) is a homomorphism.
\end{proof}
{Theorem~\ref{x:theorem:thm-isometric-embeddings}} follows directly by combining the statements of {Corollary~\ref{x:corollary:cor-homogeneous-group-geodesics-are-lines}} and {Proposition~\ref{x:proposition:prop-affine-geodesics-implies-affine-embeddings}}.

\subsection*{Acknowledgements}
The author wishes to thank Enrico Le Donne and Yuri Sachkov for helpful discussions on the control viewpoint to infinite geodesics that paved the way to the conclusion of the main proof. The author also wishes to thank Ville Kivioja for his help during the outset of the project in the study of the known results and their abstractions.

\bibliographystyle{amsalpha}
\bibliography{ref}

\end{document}